\newtheorem{theorem}{Theorem}[section]
\newtheorem{lemma}[theorem]{Lemma}
\newtheorem{corollary}[theorem]{Corollary}
\theoremstyle{definition}
\newtheorem{remark}[theorem]{Remark}
\newtheorem*{ass1}{Assumption (A1)}
\newtheorem*{ass2}{Assumption (A2)}
\numberwithin{equation}{section}
\newcommand\E{\mathbb{E}}
\newcommand\R{\mathbb{R}}
\renewcommand\P{\mathbb{P}}
\def\Ric{{\operatorname{Ric}}} 
\def\Cut{{\operatorname{Cut}}}
\def\r{\right} \def\l{\left} \def\e{\operatorname{e}}
\newcommand\newdot{{\kern.8pt\cdot\kern.8pt}}
\def\<{\langle} \def\>{\rangle}  
 \def\1{\mathds{1}}
   \def\th{\theta}
 \def\ka{\kappa}
\def\sC{\mathscr C} \def\sP{\mathscr P}
\begin{document}

\title[Exponential contraction in Wasserstein distance] {Exponential
  contraction in Wasserstein distance \\ on static and evolving
  manifolds}

\author[L.J.~Cheng, A. Thalmaier, S.Q.~Zhang]{Li-Juan
  Cheng\textsuperscript{1,2}, Anton Thalmaier\textsuperscript{1} and
  Shao-Qin Zhang\textsuperscript{3}}

\address{\textsuperscript{\rm 1}Department of Mathematics, Maison du Nombre, University of Luxembourg\\
  4364 Esch-sur-Alzette, Luxembourg}
\address{\textsuperscript{\rm 2}Department of Applied Mathematics, Zhejiang University of Technology\\
  Hangzhou 310023, The People's Republic of China}
\address{\textsuperscript{\rm 3}School of Statistics and Mathematics,
  Central University of Finance and Economics\\ Beijing 100081, The
  People's Republic of China}

\email{lijuan.cheng@uni.lu \text{\rm and} chenglj@zjut.edu.cn}
\email{anton.thalmaier@uni.lu}
\email{zhangsq@cufe.edu.cn}

\keywords{Wasserstein distance, diffusion process, exponential contraction,
  Ricci curvature, Ricci flow}
\subjclass[2010]{60J60, 58J65, 53C44} \date{\today}

\maketitle
{\centering\small Dedicated to the memory of Nicu Boboc\par}

\begin{abstract}
  In this article, exponential contraction in Wasserstein distance for
  heat semigroups of diffusion processes on Riemannian manifolds is
  established under curvature conditions where Ricci curvature is not
  necessarily required to be non-negative.  Compared to the results of
  Wang (2016), we focus on explicit estimates for the exponential
  contraction rate.  Moreover, we show that our results extend to
  manifolds evolving under a geometric flow.  As application, for the
  time-inhomogeneous semigroups, we obtain a gradient estimate with an
  exponential contraction rate under weak curvature conditions, as
  well as uniqueness of the corresponding evolution system of
  measures.

\end{abstract}

\section{Introduction}
Let $M$ be a $d$-dimensional connected Riemannian manifold and
consider the operator $L=\Delta+Z$ where $\Delta$ is the
Laplace-Beltrami operator and $Z$ a $C^1$-vector field on
$M$. We denote by $X_t$ the diffusion process with generator $L$, which is
characterized by the property that
for any test function $f$ on $M$, the relation
$$df(X_t)-Lf(X_t)\,dt=0,\quad\text{modulo differentials of martingals,}$$
holds in the It\^o sense.
Throughout the paper we assume that the $L$-diffusion process is
non-explosive.  This holds true, in particular, when the Bakry-\'Emery
Ricci curvature of $M$ is bounded from below, that is, for some real
constant $K$,
\begin{align}\label{RicZ}
  \Ric^Z(X,X):=\Ric(X,X)-\langle\nabla_X Z,X\rangle \geq K |X|^2,\quad
  X\in T_xM,\ x\in M.
\end{align}
Let $P_t$ be the Markov transition semigroup associated to $X_t$ and
$\mu P_t$ be the law of $X_t$ with initial distribution $\mu$.  It is
well known that there are various functional inequalities on $P_t$ which
all give conditions equivalent to the curvature condition
\eqref{RicZ}, see \cite{BGL14, Wbook14}.

In this article, we investigate $L^q$-Wasserstein contraction
inequalities ($q\geq 1$) for $\mu P_t$.  Denote by $\sP(M)$ the set of
probability measures on $M$.  On $\sP(M)$ the
$L^q$-Wasserstein distance is defined as
$$W_q(\mu_1,\mu_2):=\inf_{\pi\in\sC(\mu_1,\mu_2)}
\left(\iint_{M\times M}\rho(x,y)^q\,d\pi( x, y)\right)^{1/q},
\quad \mu_1,\mu_2\in\sP(M),$$ where
$\rho$ denotes the Riemannian distance on $M$ and $\sC(\mu_1,\mu_2)$
consists of all couplings of $\mu_1$ and~$\mu_2$.  The Wasserstein
distance has various characterizations and plays an important role in
the study of SDEs, partial differential equations, optimal
transportation problem, etc.  For more background, one may consult
\cite{vonRS05,Kuwada13,Wbook14} and the references therein.

The $L^q$-Wasserstein distance $W_q$ on $\sP(M)$ will be used to
quantify the time evolution of $(\mu P_t)_{t\geq 0,\,\mu\in\sP(M)}$.
A typical phenomenon of interest for the system
$(\mu P_t)_{t\geq 0,\,\mu\in\sP(M)}$ is exponential contraction in
the Wasserstein distance, i.e.
\begin{align}\label{intr-Wq}
  W_q(\mu_1P_t,\mu_2 P_t)\leq c\,\e^{-\kappa t}W_q(\mu_1,\mu_2),\quad t\geq 0,\ q\geq 1,
\end{align}
with positive constants $c$ and $\kappa$. We refer the reader to
\cite{Eberle11,Eberle16,LW16,zhang} for work in this direction on the
Euclidean space $M=\R^d$. When $M$ is a Riemannian manifold, for instance, under the
curvature condition
\begin{align}\label{low-cd}
  \Ric^Z(X,X)\geq \ka|X|^2
\end{align}
with $\ka\geq0$, the exponential contraction \eqref{intr-Wq} holds
with $c=1$ and $\ka$ the curvature bound
in~\eqref{low-cd}. Moreover, it is well-known that inequality
\eqref{intr-Wq} with $c=1$ is actually equivalent to the lower
curvature bound~\eqref{low-cd}.  For certain cases, when $\Ric^Z$ is
not bounded from below by zero, Wang \cite{Wang16} showed that the
following inequality holds: for any $q\geq 1$,
\begin{align}\label{W-add-1}
  W_q(\delta_x P_t,\delta_y P_t)\leq c\e^{-{\lambda} t}\left(\rho(x,y)\vee \rho(x,y)^{1/q}\right)
\end{align}
for some constant $c>1$ and $\lambda>0$.

In order to weaken condition \eqref{low-cd} as in \cite{Wang16},
let us first recall the definition of the index:
\begin{align*}
  I^Z(x,y)=I(x,y)+\langle Z,\nabla \rho(\newdot, y)\rangle+\langle Z,\nabla \rho(\newdot,x)\rangle,
  \quad x,y\in M,
\end{align*}
where
\begin{align*}
  I(x,y)=\int_0^{\rho(x,y)}\sum_{i=1}^{d-1}\l\{|\nabla_{\dot{\gamma}}J_i|^2-\langle R(\dot{\gamma},J_i)\dot{\gamma}, J_i \rangle \r\}(\gamma_s)\,d s.
\end{align*}
Here $\rho$ is the distance function, $R$ the Riemann curvature tensor
of $M$, $\gamma\colon [0,\rho(x,y)]\to M$ the minimal geodesic from
$x$ to $y$ with unit speed, $(J_i)_{i=1,\ldots,d-1}$ are Jacobi fields
along $\gamma$ such that
\begin{align*}
  J_i(y)=P_{x,y}J_i(x),\quad i=1,\ldots, d-1,
\end{align*}
for the parallel transport $P_{x,y}\colon T_xM \rightarrow T_yM$
along the geodesic $\gamma$, and
$\{\dot{\gamma}(s), J_i(s)\colon\,1\leq i\leq d-1\}$
($s=0,\ \rho(x,y)$) is an orthonormal basis of the tangent space (at
point $x$ and $y$, respectively). Note that when
$(x,y)\in \Cut(M)$, that is if $x$ is in the cut-locus of $y$,
the minimal geodesic may be not unique. As it is a common convention in
the literature, all conditions on the index $I^Z$ are supposed to hold
outside of $\Cut(M)$. If there exists positive constants $K_1$
and $K_2$ such that
\begin{align}\label{IZ-cond}
  I^Z(x,y)\leq \left((K_1+K_2)\1_{\{\rho(x,y)\leq r_0\}}-K_2\right)\rho(x,y)
\end{align}
and $\Ric^Z$ is bounded below, then \eqref{intr-Wq} holds with $\ka>0$
and $c>1$ for any $q\geq 1$, see \cite{Wang16}. This is the case, for
instance, when $\Ric^Z$ is positive outside a compact set.  It is
crucial that the exponential rate $\lambda$ is independent of $p$.
Due to the equivalence of \eqref{intr-Wq} with $c=1$ and
\eqref{low-cd}, in the negative curvature case it is essential that
$c>1$.

In this paper, we give quantitative estimates of $\ka$ and $c$ by
constructing a suitable auxiliary function. We begin the discussion
with a more general condition (see Assumption~(A1) below) which
includes situation \eqref{IZ-cond}. Actually, we rewrite condition
\eqref{IZ-cond} as follows:
\begin{align*}
  I^Z(x,y)\leq k_1-k_2\rho(x,y),
\end{align*}
for some constants $k_1\geq 0$ and $k_2>0$. Then, for $p>1$,
$t\geq 0$, and $x,y \in M$, we obtain (see Corollary~\ref{cor1} below)
that
\begin{align*}
  W_{p}(\delta_xP_t, \delta_yP_t)\leq \l(1+\frac{2k_1}{k_2}\r)^{(p-1)/p} \exp\l(\frac{k_1^2}{pk_2}-\frac{k_2}{2p\e^{k_1^2/k_2}}t\r)\l(\rho(x,y)\vee\rho(x,y)^{1/p}\r).
\end{align*}
Note that the constant ${k_2}/{(2\e^{k_1^2/k_2})}$ is independent of
$p$.

In the second part of the paper, we extend the results from Riemannian
manifolds to the differentiable manifolds carrying a geometric flow of
complete Riemannian metrics. More precisely, for some
$T_c\in (-\infty,\infty]$, we consider the situation of a
$d$-dimensional differentiable manifold $M$ equipped with a $C^1$
family of complete Riemannian metrics $(g_t)_{t\in (-\infty,T_c)}$.
Let $L_t=\Delta_t+Z_t$, where $\Delta_t$ is the Laplace-Beltrami
operator associated with the metric $g_t$ and $(Z_t)_{t\in [0,T_c)}$
is a $C^{1}$-family of vector fields on $M$.  Assume that the
diffusion process $(X_t)$ generated by $L_t$ is non-explosive before
time $T_c$ (see \cite{ACT08} for detailed construction).  Let
$P_{s,t}$ be the corresponding time-inhomogeneous semigroup.

In \cite{Ch17}, the first author has shown that if
\begin{align*}
  \left(\Ric_t^Z-\frac12\partial_tg_t\right)(X,X)(x)\geq \kappa\,|X|_{t}^2(x)
\end{align*}
for some positive constant $\kappa$, where $\Ric_t^Z$ is defined as in
\eqref{RicZ} for the manifold $(M,g_t)$, then exponential contraction
in $L^p$-Wasserstein distance holds with respect to the
$g_t$-Riemannian distance $\rho_t$.

In this paper, we consider situations where
$\Ric_t^Z-\frac12\partial_tg_t$ is not necessarily bounded below by
zero.  More precisely, assuming that there exists a real-valued
function $k$ such that $\liminf_{r\rightarrow \infty}k(r)>0$ and
\begin{align*}
  \left(\Ric_t^Z-\frac12\partial_tg_t\right)(X,X)(x)\geq k(\rho_t(x))|X|_{t}^2(x),
\end{align*}
we prove that
\begin{align}\label{intr-Wq-2}
  \tilde{W}_{p,s}(\mu_1P_{s,t},\mu_2 P_{s,t})\leq c\e^{-\frac 1p\lambda (t-s)}\tilde{W}_{p,t}(\mu_1,\mu_2),
  \quad t\geq s,\ p\geq 1,
\end{align}
holds for some positive constants $c$ and $\lambda$, where
$$ \tilde{W}_{ p,t}(\mu_1,\mu_2)=\inf_{\pi\in\mathcal{C} (\mu_1,\mu_2)}\l(\int_{M\times M} \rho_t(x,y)^p\vee \rho_t(x,y) \,\pi(d x, d y)\r)^{1/p}.$$
Moreover, in Theorem \ref{main-them2} we give estimates for the
constants $c$ and $\lambda$ and apply these results to estimates of
the semigroup.

Furthermore, we use the $W_{1,t}$-contraction property to prove
uniqueness of the evolution system of measures. It is well known that
invariant measure provide important tools in the study of the long
behavior of diffusion processes. When it comes to time-inhomogeneous
diffusions, the evolution system of measures plays a role similar to
the invariant measure.  In \cite{ChT18}, the first two authors
investigated existence and uniqueness of evolution systems of
measures. In particular, they found that $W_1$-contraction of the
distance helps to prove uniqueness properties (see \cite{ChT18} for
details).  Since now the $W_1$-contraction is established even in
cases when the lower bound of the curvature may be negative, this
allows to improve the result in \cite{ChT18} where a uniform lower
curvature bound had been imposed for each time.  Inspired by this, in
Section 4, we consider uniqueness of the evolution system of measures
under a new relaxed curvature condition which allows a lower bound of
curvature depending on the radial distance (see Theorem
\ref{cor4}). It is surprising that under this new condition, a type of
dimension-free Harnack inequality can be derived which then may be
used to obtain supercontractivity of the semigroup $P_{s,t}$ (see
Theorem \ref{supercontractive-them}).

The paper is structured as follows. In Section 2, we investigate
\eqref{W-add-1} by constructing a suitable coupling $(X,Y)$ and using
a new auxiliary function to measure the distance of $X$ and $Y$. Our
result in this section can be applied to the time-inhomogeneous
diffusion process on manifolds carrying geometric flows in Section
3. Section 4 is devoted to the study of existence of evolution system
of measures under the new kind of curvature condition. Finally,
supercontractivity of the semigroup $P_{s,t}$ with respect to the
evolution system of measure is studied by establishing dimension-free
Harnack inequalities.  \bigskip

\section{Exponential contraction in Wasserstein distance}

We begin this section by specifying our assumptions.

\begin{ass1}
  \it There exist a non-negative continuous function $k_1$ on
  $(0,\infty)$, a positive constant $k_2$ and and a constant $\th\geq 0$ such that
  \begin{align}\label{main-cond}
    I^Z(x,y)\leq k_1(\rho(x,y))-k_2\rho(x,y)^{1+\theta}
  \end{align}
  and such that for some positive
  constants $r_0$ and $k_3$ (with $k_3<k_2$) the following two
  conditions hold:
  \begin{enumerate}[\rm(1)]
  \item $k_1(r)-k_2r^{1+\theta}\leq -k_3r^{1+\theta}$,\quad for
    $r\geq r_0$,
  \item $\int_0^rk_1(v)\, dv<\infty$, \quad for each $r>0$.
  \end{enumerate}
\end{ass1}

\begin{remark}\label{remark}
  Note that if $\Ric^Z(x)\geq k(\rho(x))$ and
$\liminf_{r\rightarrow \infty} k(r)>0$, then there exist constants $k_1$
and $k_2$ such that
\begin{align}\label{main-cond1}
  I^Z(x,y)\leq k_1-k_2\rho(x,y).
\end{align}
In this case, Assumption (A1) is satisfied with $k_1$ a non-negative
constant and $\theta=0$.
\end{remark}

We now state some exponential contraction inequalities for the
Wasserstein distance with explicit estimates of the decay rate.

 \begin{theorem}\label{main-them}
   Suppose that Assumption $(A1)$ holds. Then,
   \begin{enumerate}[\rm(i)]
   \item for $p> 1$, $t\geq 0$, and $x,y \in M$, we have
     \begin{align*}
       W_{p}(\delta_xP_{t},\delta_yP_{t})\leq c_p\e^{-\lambda t/p}(\rho(x,y)\vee\rho(x,y)^{1/p}),
     \end{align*}
     where
     \begin{align*}
       c_p&=(1+r_0)^{(p-1)/p}\exp\l(\frac1{4p}\int_0^{r_0}k_1(r)\,dr+\frac{k_2}{8p}r_0^{2+\theta}\r)\\ \intertext{and}
       \lambda&=k_3r_0^\theta\exp\l(-\frac1{4}\int_0^{r_0}k_1(r)\,dr-\frac{k_2}{8}r_0^{2+\theta}\r);
     \end{align*}
   \item for $t\geq 0$, $\mu_1,\mu_2\in \mathcal{P}(M)$ and $p>1$, we
     have
     \begin{align*}
       \tilde{W}_p(\mu_1 P_t, \mu_2P_t)\leq c_p\e^{-\lambda t/p} \tilde{W}_p(\mu_1, \mu_2),
     \end{align*}
     where
     $$ \tilde{W}_{ p}(\mu_1,\mu_2)=\inf_{\pi\in\mathcal{C}
       (\mu_1,\mu_2)}\l(\int_{M\times M} \rho(x,y)^p\vee \rho(x,y)
     \pi(d x, d y)\r)^{1/p};$$

   \item for $t\geq 0$ and $\mu_1,\mu_2\in \mathcal{P}(M)$, we have
     \begin{align*}
       W_1(\mu_1P_t,\mu_2P_t)\leq c_1\e^{-\lambda t}W_1(\mu_1,\mu_2).
     \end{align*}
   \end{enumerate}
 \end{theorem}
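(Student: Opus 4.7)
I would construct a reflection coupling $(X_t,Y_t)$ of two $L$-diffusions issued from $x$ and $y$, in the style of Kendall--Cranston. Applying It\^o's formula to the distance function and using the second variation of arc length together with the definition of $I^Z$, I obtain, on the complement of the cut locus,
\[
  d\rho(X_t,Y_t) \leq 2\,dB_t + I^Z(X_t,Y_t)\,dt,
\]
where $B$ is a one-dimensional Brownian motion; the cut-locus contribution is absorbed by a standard approximation together with non-explosion. Assumption (A1) reduces this to the scalar bound $d\rho_t \leq 2\,dB_t + (k_1(\rho_t) - k_2\rho_t^{1+\theta})\,dt$.

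\textbf{Design of the auxiliary function.} The crux of the proof is to construct a strictly increasing, concave $C^2$ function $f\colon[0,\infty)\to[0,\infty)$ with $f(0)=0$ satisfying the radial differential inequality
\[
  2f''(r) + \bigl(k_1(r) - k_2 r^{1+\theta}\bigr)f'(r) \leq -\lambda\,f(r), \qquad r>0,
\]
with $\lambda$ as quoted in the theorem. My ansatz is $f(r) = \int_0^r \e^{-\Psi(s)}\,ds$ with $\Psi$ non-decreasing on $[0,r_0]$ and constant on $[r_0,\infty)$; the finiteness of $\int_0^{r_0} k_1(u)\,du$ from condition (A1)(2) guarantees that $\Psi$ is a well-defined continuous function. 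On the inner region $[0,r_0]$ the term $2f''(r) = -2\Psi'(r)\e^{-\Psi(r)}$ is designed to absorb the possibly positive drift $k_1(r)f'(r)$, leaving a margin $-\lambda f(r)$ on the right; on the outer region $[r_0,\infty)$ the two-sided bound $\e^{-\Psi(r_0)}\,r \leq f(r) \leq r$ combines with condition (A1)(1), i.e.\ $k_1(r) - k_2 r^{1+\theta} \leq -k_3 r^{1+\theta} \leq -k_3 r_0^{\theta}\,r$, to yield the same rate of contraction. Calibrating these two constraints produces $\Psi(r_0) = \tfrac14\int_0^{r_0} k_1(u)\,du + \tfrac{k_2}{8}r_0^{2+\theta}$ and $\lambda = k_3 r_0^{\theta}\e^{-\Psi(r_0)}$, matching the statement.

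\textbf{From the inequality to the Wasserstein bounds.} Applying It\^o's formula to $f(\rho_t)$ and discarding the local martingale by standard localization (using non-explosion) yields
\[
  \E[f(\rho_t)] \leq \e^{-\lambda t}\,f(\rho(x,y)).
\]
The two-sided comparison $\e^{-\Psi(r_0)}\,r \leq f(r) \leq r$ then immediately gives (iii) for Dirac initial data, and the extension to general $\mu_1,\mu_2$ follows by coupling along an optimal $W_1$-transport plan. For (i) with $p>1$, I would run the same argument with a $p$-modified auxiliary function $f_p$ comparable to $r\vee r^{1/p}$: the additional weight $(1+r)^{(p-1)/p}$ in $f_p$ produces the prefactor $(1+r_0)^{(p-1)/p}$ in $c_p$ when converting back at the transition $r=r_0$, and the Jensen-type argument intrinsic to the $p$-th-power reformulation replaces $\lambda$ by $\lambda/p$. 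Part (ii) then follows from (i) by taking an optimal $\tilde W_p$-coupling of $\mu_1,\mu_2$ and integrating the pointwise estimate.

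\textbf{Main obstacle.} The delicate step is to tune $f$ (and its $p$-parametrized cousin $f_p$) so that the differential inequality above holds with the precise constants quoted in the theorem. On $[0,r_0]$ the concavity generated by $\Psi$ must cancel the positive part of $k_1(r)f'(r)$ with just enough slack to retain the term $-\lambda f(r)$; at $r=r_0$ the outer decay governed by $k_3 r_0^{\theta}$ must be at least as strong as the inner rate; and for $p>1$, the weight $(1+r)^{(p-1)/p}$ introduces extra mixed terms whose bookkeeping is exactly what produces the loss of a factor $1/p$ in the exponential rate.
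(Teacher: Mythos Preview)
Your treatment of part (iii) via a concave Lyapunov function $f$ and reflection coupling is correct and matches the paper's scheme in spirit; the paper merely parametrises the auxiliary function as $\psi(\rho^p)$ with $\psi(r)\asymp r^{1/p}$, which for the $L^1$ estimate is just a change of variables.

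There is, however, a genuine gap in your handling of part~(i). The paper does \emph{not} use pure reflection coupling. It couples by reflection only while $\rho(X_t,Y_t)\le r_0$ and by \emph{parallel transport} once $\rho(X_t,Y_t)\ge r_0+1$, interpolating smoothly via a cut-off $\sigma$. Under parallel transport the martingale part of $d\rho_t$ vanishes and the drift is strictly negative for $\rho_t>r_0$; this yields the deterministic pathwise bound
\[
  \rho(X_t,Y_t)\;\le\;(1+r_0)\vee\rho(x,y),\qquad t\ge0.
\]
The $W_p$ estimate then follows from the crude inequality
\[
  \E\bigl[\rho(X_t,Y_t)^p\bigr]\;\le\;\bigl((1+r_0)\vee\rho(x,y)\bigr)^{p-1}\,\E\bigl[\rho(X_t,Y_t)\bigr]
\]
combined with the $L^1$ contraction already obtained. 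This is precisely the origin of the factor $(1+r_0)^{(p-1)/p}$ in $c_p$ and the reason the exponent $\lambda$ is independent of~$p$. Under pure reflection, $\rho(X_t,Y_t)$ is almost surely unbounded and this step fails; your ``$p$-modified auxiliary function $f_p$ comparable to $r\vee r^{1/p}$'' does not provide a substitute, since an estimate on $\E[f_p(\rho_t)]$ with such an $f_p$ gives no control on $\E[\rho_t^p]$.

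A minor secondary point: with the paper's convention $L=\Delta+Z$ the process solves $d_IX_t=\sqrt2\,u_t\,dB_t+Z\,dt$, so the radial SDE reads $d\rho_t\le 2\sqrt2\,\sigma(\rho_t)\,db_t+I^Z\,dt$ and the second-order coefficient in It\^o's formula is $4\sigma^2$, not~$2$. With your normalisation $2\,dB_t$ the displayed values of $\Psi(r_0)$ and $\lambda$ would not come out as stated.
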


\begin{remark}\label{rem-mainresult1}
  Since $r_0$ and $k_3$ are independent of $p$,
  the constant $\lambda$ in Theorem \ref{main-them}  also
  does not depend on $p$. Moreover, although $c_p$ depends on $p$, it can
  be controlled by a constant independent of $p$:
  \begin{align*}
    c_p&=(1+r_0)^{(p-1)/p}\exp\l(\frac1{4p}\int_0^{r_0}k_1(r)\,dr+\frac{k_2}{8p}r_0^{2+\theta}\r)\\
       &\leq (1+r_0)\exp\l(\frac14\int_0^{r_0}k_1(r)\,dr+\frac{k_2}{8}r_0^{2+\theta}\r).
  \end{align*}
\end{remark}

For the proof of Theorem \ref{main-them}, the function $\psi$ defined
below and its properties will be crucial.  First let
$\sigma\in C^1([0,\infty))$ be a function satisfying $0<\sigma\leq 1$
for $r\in (r_0,r_0+1)$, $\sigma\equiv1$ for $r\leq r_0$ and
$\sigma\equiv 0$ for $r\geq r_0+1$.  Furthermore, define
\begin{align*}
  &\ell_0(r)=4p^2r^{2(p-1)/p}\sigma(r^{1/p})^2,\\
  &\ell_1(r)=pr^{1-1/p}k_1(r^{1/p})-pk_2r^{1+\theta/p}+4p(p-1)r^{1-2/p}\sigma(r^{1/p})^2,
\end{align*}
and let
\begin{align*}
  \ell(r)=pk_2r_0^{\theta}r\1_{[0,r_0^p)}+\l(\frac{p-1}{p}\frac{\ell_0(r)}{r}-\ell_1(r)\r)\1_{[r_0^p,\infty)}.
\end{align*}
Since $\th\geq 0$, it is obvious that for $r\in (0,r_0)$,
\begin{align}\label{k1k2-ineq}
 k_1(r)-k_2r^{\theta+1}> -k_2r_0^{\theta}r.
 \end{align}
 We thus have $\ell_1+\ell>0$, according to the definitions of $\ell_1$
 and $\ell$.  Next, consider the function
 \begin{align}\label{Def-psi}
   \psi(r)=\int_0^r\exp{\l(-\int_{r_0^p}^{u}\frac{\ell_1(v)+\ell(v)}{\ell_0(v)}\,d v\r)}\,du.
 \end{align}
 The following lemma collects properties of~$\psi$.
 \begin{lemma}\label{lem1}
 Let $k_1,k_2,k_3,\theta$ and $r_0$ be given by Assumption $(A1)$.
 The function $\psi$ in \eqref{Def-psi} is well defined, twice differentiable on $(0,\infty)$,
 and satisfies $\psi'>0$ and
   $\psi''<0$. In addition,
   \begin{enumerate}[\rm(i)]
   \item for $r>0$, we have
     \begin{align*}
       \ell_1(r)\psi'(r)+\ell_0(r)\psi''(r) =-\ell(r)\psi'(r);
     \end{align*}
   \item there exist positive constants $\tilde{c}_1$ and $\tilde{c}_2$ such that
     \begin{align*}
       \tilde{c}_1r^{1/p}\leq \psi(r)\leq \tilde{c}_2r^{1/p}
     \end{align*}
     where
     \begin{align*}
       \tilde{c}_1=pr_0^{p-1} \quad\text{and}\quad
       \tilde{c}_2=pr_0^{p-1}\exp\l(\frac14\int_0^{r_0}k_1(r)\,dr+\frac{k_2}{8}r_0^{2+\theta}\r);
     \end{align*}
   \item for any $r>0$,
     \begin{align*}
       \ell (r)\psi'(r)\geq \lambda\psi(r),
     \end{align*}
     where
     $$\lambda=k_3r_0^\theta\exp\l(-\frac14\int_0^{r_0}k_1(r)\,dr-\frac{k_2}{8}r_0^{2+\theta}\r).$$
   \end{enumerate}
 \end{lemma}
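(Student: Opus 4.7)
My plan is to handle (i) directly from the definition of $\psi$, and then to prove (ii) and (iii) by a single change of variables $u = r^{1/p}$ that makes the relevant quantities explicit.

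For (i), starting from \eqref{Def-psi} one immediately has
$$\psi'(r) = \exp\!\left(-\int_{r_0^p}^r \frac{\ell_1(v)+\ell(v)}{\ell_0(v)}\,dv\right) > 0,$$
and the ODE in (i) follows by differentiating and multiplying through by $\ell_0(r)$. I first have to check that the integrand is well-defined and integrable at $0$: on $[r_0^p,\infty)$ the definition of $\ell$ forces $\ell_1 + \ell = \tfrac{p-1}{p}\,\ell_0/r$, so the ratio equals the smooth function $\tfrac{p-1}{pr}$ there (with the natural extension where $\ell_0$ vanishes); on $(0,r_0^p)$, $\sigma\equiv 1$ makes $\ell_0$ strictly positive, and the $\tfrac{p-1}{pr}$ contribution shows that $\psi'$ is integrable at~$0$. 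The inequality $\ell_1+\ell>0$ on $(0,r_0^p)$, noted in the text via \eqref{k1k2-ineq} together with $p>1$, then gives $\psi''<0$.

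For (ii) and (iii), set $\Psi(u):=\psi(u^p)$, so that $\Psi'(u)=p u^{p-1}\psi'(u^p)$. A direct evaluation of the integral defining $\psi'$ after the substitution $v=s^p$ yields $\Psi'(u)=p r_0^{p-1}$ for $u\geq r_0$ and, for $u\in(0,r_0)$,
\begin{align*}
\Psi'(u) &= p r_0^{p-1}\,\e^{h(u)}, \\
h(u) &= \tfrac14 \int_u^{r_0} k_1(s)\,ds - \tfrac{k_2}{4(\theta+2)}\bigl(r_0^{\theta+2}-u^{\theta+2}\bigr) + \tfrac{k_2 r_0^\theta}{8}\bigl(r_0^2-u^2\bigr).
\end{align*}
Differentiating gives $4h'(u)=-k_1(u)+k_2 u^{\theta+1}-k_2 r_0^\theta u$, which is negative on $(0,r_0)$ by~\eqref{k1k2-ineq}; combined with $h(r_0)=0$, this forces $h\geq 0$ on $[0,r_0]$, so $\Psi'(u)\geq p r_0^{p-1}$ and hence $\psi(r)\geq \tilde c_1 r^{1/p}$. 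For the upper bound I check that the last two terms of $h$ sum to at most $\tfrac{k_2}{8}r_0^{\theta+2}$: their sum is non-increasing in $u$ (its $u$-derivative has a fixed sign since $r_0^\theta\geq u^\theta$ for $\theta\geq 0$) and equals $k_2 r_0^{\theta+2}\bigl(\tfrac18-\tfrac{1}{4(\theta+2)}\bigr)\leq \tfrac{k_2}{8}r_0^{\theta+2}$ at $u=0$. Thus $h\leq h_{\max}:=\tfrac14\int_0^{r_0}k_1+\tfrac{k_2}{8}r_0^{\theta+2}$ and $\psi(r)\leq \tilde c_2 r^{1/p}$.

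For (iii), I split at $r_0^p$. When $r\geq r_0^p$, the two $\sigma^2$ contributions in $\tfrac{p-1}{p}\ell_0/r-\ell_1$ cancel and
$$\ell(r)=p r^{1-1/p}\bigl(k_2 r^{(\theta+1)/p}-k_1(r^{1/p})\bigr);$$
Assumption~(A1)(1) yields $\ell(r)\geq p k_3 r^{1+\theta/p}$, and combining the explicit $\psi'(r)=r_0^{p-1}r^{-(p-1)/p}$ with $\psi(r)\leq \tilde c_2 r^{1/p}$ reduces the desired inequality to $r^{\theta/p}\geq r_0^\theta$, which holds since $r\geq r_0^p$. When $r<r_0^p$, I have $\ell(r)=p k_2 r_0^\theta r$; rewriting in the $u$-variable gives $\ell(r)\psi'(r)=p k_2 r_0^{p-1+\theta}u\,\e^{h(u)}$, and using $\psi(r)=\Psi(u)\leq p r_0^{p-1}\,\e^{h_{\max}} u$ together with $h(u)\geq 0$ reduces the claim to $k_2\geq k_3$, guaranteed by Assumption~(A1). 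The main obstacle will be the careful bookkeeping in the change of variables and the sign analysis of $h(u)$, since its three constituents carry competing signs that depend on~$\theta$.
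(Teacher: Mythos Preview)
Your proof is correct and follows essentially the same route as the paper. The paper works directly in the $r$-variable, bounding the integral $\int_{r_0^p}^{r}(\ell_1+\ell)/\ell_0\,dv$ above and below for $r<r_0^p$ by manipulating the integrand termwise (dropping signed terms and extending the range of integration), while you make the substitution $u=r^{1/p}$ explicit and package the same bounds into the monotonicity and endpoint values of your auxiliary function $h$. Both arguments rest on the same key inequality \eqref{k1k2-ineq}, and the treatment of~(iii) is identical in substance.
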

 \begin{proof}
   The first assertion is immediate from the definition of $\psi$.
   For $0<r< r_0^p$, we have $\sigma(r^{1/p})=1$ and then
   \begin{align*}
     \int_{r_0^p}^r\frac{\ell(v)+\ell_1(v)}{\ell_0(v)}\,dv=\int_{r_0^p}^r\l(\frac{k_1(v^{1/p})}{4pv^{1-1/p}}+\frac{p-1}{p}v^{-1}-\frac{k_2v^{-1+\frac{2+\theta}{p}}}{4p}+\frac{k_2r_0^{\theta}v^{-1+\frac{2}{p}}}{4p}\r)\,dv.   \end{align*}
   As $k_1, k_2$ satisfy \eqref{k1k2-ineq}, we find
   \begin{align}\label{eq1(less)}
     \int_{r_0^p}^r\frac{\ell(v)+\ell_1(v)}{\ell_0(v)}\,dv&\leq
                                                            \ln r^{(p-1)/p}-\ln r_0^{p-1}
   \end{align}
   and
   \begin{align}\label{eq2(less)}
     \int_{r_0^p}^r\frac{\ell(v)+\ell_1(v)}{\ell_0(v)}\,dv&\geq \ln r^{(p-1)/p}-\ln r_0^{p-1}-\int_0^{r_0^p}\frac{k_1(v^{1/p})}{4pv^{1-1/p}}\,dv-\int_0^{r_0^p}\frac{kv^{-1+\frac{2}{p}}}{4p}\,dv.
   \end{align}
   Combining \eqref{eq1(less)} and \eqref{eq2(less)}, we conclude that
   \begin{align*}
     r_0^{p-1} r^{(1-p)/p}\leq \psi'(r)\leq \exp\l(\frac14\int_0^{r_0}k_1(r)\,dr+\frac{k}{8}r_0^{2}\r)r_0^{p-1}r^{(1-p)/p}. \end{align*}
   This implies
   \begin{align}\label{psi-est1}
     \tilde{c}_1r^{1/p}\leq \psi(r)\leq \tilde{c}_2 r^{1/p},\quad 0<r<r_0^p,
   \end{align}
   where
   \begin{align*}
     \tilde{c}_1:=p\,r_0^{p-1}\quad\text{and}\quad
     \tilde{c}_2:=p\,\exp\l(\frac14\int_0^{r_0}k_1(r)\,dr+\frac{k}{8}r_0^{2}\r)r_0^{p-1}.
   \end{align*}
   On the other hand, for $r\geq r_0^p$, we have
   \begin{align*}
     \int_{r_0^p}^r\frac{\ell(u)+\ell_1(u)}{\ell_0(u)}\, du=\frac{p-1}{p}\int_{r_0^p}^r\frac{1}{u}\, du=
     \frac{p-1}{p}(\ln r-p\ln r_0)
   \end{align*}
   which gives
   \begin{align*}
     \psi(r)&=\psi(r_0^p)+\int_{r_0^p}^{r}\exp{\l(-\int_{r_0^p}^u\frac{\ell_1(v)+\ell(v)}{\ell_0(v)}\,dv\r)}\,du\\
            &=\psi(r_0^p)+p\l(r^{1/p}r_0^{p-1}-r_0^p\r).
   \end{align*}
   Moreover,
   \begin{align*}
     \psi'(r)=r_0^{p-1}r^{(1-p)/p},\quad r\geq r_0^p.
   \end{align*}
   In particular, $\psi$ is well defined. Combining this with
   \eqref{psi-est1}, we obtain, for all $r>0$,
   \begin{align*}
     \tilde{c}_1r^{1/p}\leq \psi(r)\leq \tilde{c}_2 r^{1/p}
   \end{align*}
   where
   \begin{align*}
     \tilde{c}_1&=pr_0^{p-1},\\
     \tilde{c}_2&=p\exp\l(\frac14\int_0^{r_0}k_1(r)\,dr+\frac{k}{8}r_0^{2}\r)r_0^{p-1}.
   \end{align*}
   Using the condition $k_1(r)-k_2r^{1+\theta}\leq -k_3r^{1+\theta}$
   on $[r_0^p,\infty)$ and the above estimates for $\psi$ and $\psi'$,
   we arrive at
   \begin{align*}
     \ell(r)\psi'(r)&\geq \l(k_2 r_0^{\theta}pr\1_{[0,r_0^p)}+k_3pr^{1+\frac{\theta}{p}}\1_{[r_0^p,\infty)}\r)\psi'(r)\\
                    &\geq p r_0^{p-1}  \l(k_2 r_0^{\theta} \1_{[0,r_0^p)}+k_3r_0^{\theta}\1_{[r_0^p,\infty)}\r)r^{1/p}\\
                    &\geq \min\{k_2,k_3\}r_0^{\theta}\exp\l(-\frac14\int_0^{r_0}k_1(r)\,dr-\frac{k}{8}r_0^{2}\r)\psi(r)\\
                    &= \lambda\psi(r).\qedhere
   \end{align*}
 \end{proof}

 \begin{proof}[Proof of Theorem \ref{main-them}]
   Consider the operator $L=\Delta+Z$ where $Z$ is a vector field on
   $M$. Let $d_I$ denote the It\^{o} differential on $M$.  Then the
   $L$-diffusion process $X_t$ is obtained as solution to the
   It\^{o}-SDE
   \begin{align}\label{SDE}
     d_I X_t=\sqrt{2}\,u_t dB_t+Z(X_t)\,dt,\quad X_0=x,
   \end{align}
   where $(B_t)_{t\geq 0}$ is a $d$-dimensional standard Brownian
   motion on $\R^d$ and $(u_t)_{t\geq 0}$ a horizontal lift of
   $(X_t)_{t\geq 0}$ to the orthonormal frame bundle over $M$.
   The idea is to construct the coupling for short
   distance by reflection and for long
   distance by parallel displacement. To this end, we choose a cut-off function
   $\sigma\in C^1([0,\infty))$ as before, that is a function
   $\sigma\in C^1([0,\infty))$ satisfying $0< \sigma\leq 1$ when
   $r\in (r_0,r_0+1)$, and $\sigma\equiv 0$ when $r\geq r_0+1$ and
   $\sigma\equiv1$ when $r\leq r_0$. For $(x,y)\notin \Cut(M)$, let
   \begin{align*}
     M_{x,y}\colon\, T_xM\rightarrow T_yM,\quad
     v\mapsto P_{x,y}v-2\langle v,\dot{\gamma} \rangle(x)\dot{\gamma}(y)
   \end{align*}
   be the mirror reflection, where $\gamma$ is the minimal geodesic
   from $x$ to $y$.  We rewrite SDE \eqref{SDE} as
   \begin{align*}
     d_I X_t=\sqrt{2}\l(\sigma(\rho(X_t,Y_t)) \,u_t dB_t'+\sqrt{1-\sigma(\rho(X_t,Y_t))^2}\,u_t d B_t''\r)+Z(X_t)dt,
   \end{align*}
   where $B_t'$ and $B_t''$ are two independent Brownian motions.  Now
   define $Y_t$ as solution to the following SDE on $M$ with initial
   condition $Y_0=y$:
   \begin{align}
     d_I Y_t&=\sqrt{2}\l(\sigma(\rho(X_t, Y_t))\,M_{X_t,Y_t}u_td B_t'+\sqrt{1-\sigma(\rho(X_t,Y_t))^2}\,P_{X_t,Y_t}u_td B_t''\r)+Z(Y_t)dt.\label{coupling-eq2}
   \end{align}
   Since the coefficients of the SDE are at least $C^1$ outside the
   diagonal $\{(z,z)\colon z\in M\}$, there is a unique solution up to
   the coupling time
$$T:=\inf\{t\geq 0\colon X_t=Y_t\}.$$
As usual, we let $X_t=Y_t$ for $t\geq T$.
We ignore here technical difficulties related to a possibly non-empty cut-locus $\Cut(M)$.
It is well known how to deal with these issues, see for instance \cite[Chapt.~2]{wang2006functional} or
\cite[Sect.~3]{ATW:2006} for details.
The presence of a cut-locus actually facilitates
the coupling; it decreases the distance of the two marginal processes.

Next, we have by It\^{o}'s formula,
\begin{align*}
  d \rho(X_t,Y_t)&\leq 2\sqrt{2}\sigma(\rho(X_t,Y_t))\,d b_t+I^Z(X_t,Y_t)dt\\
                 &\leq 2\sqrt{2}\sigma(\rho(X_t,Y_t))\, d b_t
                   + \l(k_1(\rho(X_t,Y_t))-k_2\rho(X_t,Y_t)^{1+\theta}\r)\, dt,\quad t\leq T,
\end{align*}
where $b_t$ is a one-dimensional Brownian motion on $\R$.  Thus,
\begin{align*}
  d \rho(X_t,Y_t)^p&\leq p\rho(X_t,Y_t)^{p-1}\, d\rho(X_t,Y_t)
                     +\frac12p(p-1)\rho(X_t,Y_t)^{p-2}\,d \langle \rho\rangle_t \\
                   & \leq  p \rho(X_t,Y_t)^{p-1}\left\{2\sqrt{2}\sigma(\rho(X_t,Y_t))\,d b_t
                     +\left(k_1(\rho(X_t,Y_t))-k_2\rho(X_t,Y_t)^{1+\theta}\right)\,dt\right\}\\
                   & \quad + 4p(p-1)\sigma(\rho(X_t,Y_t))^2\rho(X_t,Y_t)^{p-2}\,dt,\quad t\leq T,
\end{align*}
where $\langle \rho\rangle_t$ denote the quadratic variation of  $\rho(X_t,Y_t)$.

Taking this calculation into account, our next step is to look at
properties of the process $\psi(\rho(X_t,Y_t)^p)$. First of all, by It\^{o}'s
formula, we have
\begin{align*}
  d\psi(\rho(X_t,Y_t)^p)&\leq\psi'(\rho(X_t,Y_t)^{p})\l(2\sqrt{2}p\rho(X_t,Y_t)^{p-1}\sigma(\rho(X_t,Y_t))\, d b_t
                          +\ell_1(\rho(X_t,Y_t)^p)\, dt\r)\\
                        &\quad+\psi''(\rho(X_t,Y_t)^p)\ell_0(\rho(X_t,Y_t)^p)\,dt\\
                        &= dM_t -\ell(\rho(X_t,Y_t)^p)\psi'(\rho(X_t,Y_t)^p)\, dt,\quad t\leq T,
\end{align*}
where
\begin{align*}
  d M_t=2\sqrt{2}p\psi'(\rho(X_t,Y_t)^p)\rho(X_t,Y_t)^{p-1}\sigma(\rho(X_t,Y_t))\, d b_t.
\end{align*}
By means of Lemma \ref{lem1}\,(iii), we get
\begin{align*}
  d\psi(\rho(X_t,Y_t)^p)\leq d M_t-\lambda\psi(\rho(X_t,Y_t)^p)\, dt,\quad t\leq T.
\end{align*}
Let $\tau_n=\{t\geq 0:\, \rho(X_t,Y_t)\notin [1/n, n]\}$. Then
$\tau_n\uparrow T$ as $n\rightarrow \infty$, and for $s\leq t$,
\begin{align}\label{Esti-Epsi}
  &\E\psi(\rho^p(X_{t\wedge\tau_n},Y_{t\wedge\tau_n}))\notag\\
  &\leq \E\psi(\rho^p(X_{s\wedge \tau_n},Y_{s\wedge \tau_n}))
    -\lambda\int_{s}^{t}\E\psi(\rho(X_{r\wedge \tau_n},Y_{r\wedge \tau_n})^p)\,dr.
\end{align}
From now on, for the sake of brevity, we simply write
$\rho^p_t:=\rho(X_t,Y_t)^p$.  Since $\psi(0)=0$ and $X_t=Y_t$ for
$t\geq T$, we have
\begin{align}\label{est-psi-rho}
  \E\psi(\rho_{t\wedge T}^p)=\E\l[\psi(\rho_t^p)\1_{\{t<T\}}\r]+\E\l[\psi(\rho_{T}^p)\1_{\{t\geq T\}}\r]=\E\psi(\rho_t^p).
\end{align}
Letting $n\rightarrow \infty$ of \eqref{Esti-Epsi} and using \eqref{est-psi-rho}, we conclude
that
\begin{align*}
  \E \psi(\rho_t^p)\leq \E \psi(\rho_s^p)-\lambda\int_s^t\E\psi(\rho_r^p)\, dr.
\end{align*}
Thus, letting
\begin{align*}
  f(t)=\E\psi(\rho^p_t),
\end{align*}
we obtain
\begin{align*}
  f(t)\leq f(s)-\lambda\int_s^tf(r)\, dr.
\end{align*}
For the function
\begin{align*}
  U(t)=\e^{-\lambda t}\psi(\rho(x,y)^p),
\end{align*}
it is immediate that
\begin{align*}
  U(t)=U(s)-\lambda \int_s^tU(r)\, dr, \quad U(0)=\psi(\rho(x,y)^p).
\end{align*}
This implies
\begin{align*}
  f(t)\leq U(t),\quad t\geq 0.
\end{align*}
Actually assume that there exists $t_0>0$ such that
$f(t_0)\geq U(t_0)$. Setting
$t_1=\sup\{s\leq t_0: \, f(s)\leq U(s)\}$, by the continuity of $f$
and $U$, we obtain that $f(t_1)=U(t_1)$ and $f(r)>U(r)$ for
$r\in (t_1,t_0)$. From this we conclude that
\begin{align*}
  f(t)&\leq f(t_1)-\lambda \int_{t_1}^tf(r)\, dr <U(t_1)-\lambda \int_{t_1}^tU(r)\, dr= U(t),
\end{align*}
that is
\begin{align}\label{esti-rho}
  \E\psi (\rho(X_t,Y_t)^p)\leq \e^{-\lambda t}\psi(\rho(x,y)^p).
\end{align}
Recall from Lemma \ref{Esti-Epsi}\,(ii) that there exist two constants
$\tilde{c}_1$ and $\tilde{c}_2$ such that
\begin{align}\label{psi_pinching}
  \tilde{c}_1r^{1/p}\leq \psi(r)\leq \tilde{c}_2r^{1/p}.
\end{align}
Combining \eqref{psi_pinching} with \eqref{esti-rho} we obtain the following estimate:
\begin{align}\label{est-radial-process}
  \E\rho(X_t,Y_t)&\leq \frac{1}{\tilde{c}_1}\E\psi(\rho(X_t,Y_t)^p)\leq \frac{\tilde{c}_2}{\tilde{c}_1}\e^{-\lambda t}\rho(x,y).
\end{align}
Recall that
\begin{align*}
  d\psi(\rho(X_t,Y_t)^p)\leq  dM_t -\ell(\rho(X_t,Y_t)^p)\psi'(\rho(X_t,Y_t)^p)\, dt.
\end{align*}
Since $\sigma(\rho(X_t,Y_t))=0$ for $\rho(X_t,Y_t)\geq r_0+1$ while
$d \psi(\rho(X_t,Y_t)^p)<0$ when $\rho(X_t,Y_t)\geq r_0+1$, we have
\begin{align*}
  \psi(\rho(X_t,Y_t)^p)\leq \psi\big((r_0+1)^p\vee \rho^p(x,y)\big),
\end{align*}
which together with \eqref{est-radial-process} implies
\begin{align*}
  \E^{(x,y)} \left[\rho(X_t,Y_t)^p\right]
  &\leq \big((1+r_0)\vee\rho(x,y)\big)^{p-1}\E[\rho(X_t,Y_t)]\\
  &\leq \frac{\tilde{c}_2}{\tilde{c}_1}\e^{-\lambda t}(1+r_0)^{p-1}\rho(x,y)\vee\rho(x,y)^{p}.\qedhere
\end{align*}
\end{proof}

According to Remark \ref{remark}, under the assumption that 
\begin{align*}
  \liminf_{\rho(x)\rightarrow \infty}\Ric^Z(x)>0,
\end{align*}
we can find positive constants $k_1$ and $k_2$ such that
$$I^Z(x,y)\leq k_1-k_2\rho(x,y),$$
and then by Theorem \ref{main-them}, there exist constants $c$ and $\lambda$ such
that \eqref{W-add-1} holds.  More precisely, we have now the following
results with explicit values for $c$ and $\lambda$.

 \begin{corollary}\label{cor1}
   Assume that
   \begin{align}\label{K1K2-condition}
     I^Z(x,y)\leq k_1-k_2\rho(x,y),
   \end{align}
   for some constants $k_1\geq 0$ and $k_2>0$. Then,
   \begin{enumerate}[\rm(i)]
   \item for $p>1$, $t\geq 0$, and $x,y \in M$,
     \begin{align*}
       W_{p}(\delta_xP_t, \delta_yP_t)\leq \l(1+\frac{2k_1}{k_2}\r)^{(p-1)/p} \exp\l(\frac{k_1^2}{pk_2}-\frac{k_2}{2p\e^{k_1^2/k_2}}t\r)(\rho(x,y)\vee\rho(x,y)^{1/p});
     \end{align*}
   \item for $p>1$, $t\geq 0$ and $\mu_1,\mu_2\in \mathcal{P}(M)$,
     \begin{align*}
       \tilde{W}_p(\mu_1 P_t, \mu_2P_t)\leq \l(1+\frac{2k_1}{k_2}\r)^{(p-1)/p}\exp\l(\frac{k_1^2}{pk_2}-\frac{k_2}{2p\e^{k_1^2/k_2}}t\r)\tilde{W}_p(\mu_1,\mu_2),
     \end{align*}
     where
 $$ \tilde{W}_{ p}(\mu_1,\mu_2)=\inf_{\pi\in \mathcal{C}(\mu_1,\mu_2)}\l(\int_{M\times M} \rho(x,y)^p\vee \rho(x,y) \pi(d x, d y)\r)^{1/p};$$
\item in particular, for $t\geq 0$,
  \begin{align*}
    \E^{(x,y)}\rho(X_t,Y_t)\leq  \exp\l(\frac{k_1^2}{k_2}-\frac{k_2}{2\e^{k_1^2/k_2}}t\r)\rho(x,y).
  \end{align*}
\end{enumerate}
\end{corollary}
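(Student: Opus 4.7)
The plan is to derive this as a direct specialization of Theorem \ref{main-them} by choosing the free parameters in Assumption (A1) optimally. Under the hypothesis \eqref{K1K2-condition}, Assumption (A1) is satisfied with the constant function $k_1(r)\equiv k_1$ and exponent $\theta=0$; the only remaining freedom lies in the threshold $r_0$ and the auxiliary constant $k_3<k_2$ appearing in clause (1).

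The key step is the choice $r_0:=2k_1/k_2$. Since condition (1) requires $k_1-k_2 r\leq -k_3 r$ on $[r_0,\infty)$, i.e.\ $k_3\leq k_2-k_1/r$, the largest admissible value for this choice of $r_0$ is $k_3=k_2/2$. A short computation then gives
$$\tfrac{1}{4}\!\int_0^{r_0}\! k_1(r)\,dr=\tfrac{k_1^2}{2k_2},\qquad \tfrac{k_2}{8}\,r_0^{2+\theta}=\tfrac{k_1^2}{2k_2},$$
so their sum equals $k_1^2/k_2$. Plugging these values into the formulas for $c_p$ and $\lambda$ provided by Theorem \ref{main-them} yields
$$c_p=\Bigl(1+\tfrac{2k_1}{k_2}\Bigr)^{(p-1)/p}\e^{k_1^2/(pk_2)},\qquad \lambda=\tfrac{k_2}{2}\,\e^{-k_1^2/k_2}.$$
Absorbing the constant factor $\e^{k_1^2/(pk_2)}$ into the exponential in $t$ produces exactly the estimates claimed in parts (i) and (ii). Part (iii) is obtained in the same way: either from Theorem \ref{main-them}(iii) with the same parameter choice, or directly by revisiting the intermediate inequality \eqref{est-radial-process} in that proof, since with $p=1$ one finds $\tilde c_2/\tilde c_1=\e^{k_1^2/k_2}$ under this choice of $r_0$.

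The only genuinely delicate point is motivating the choice $r_0=2k_1/k_2$: as $r_0$ grows, the prefactor $(1+r_0)^{(p-1)/p}$ and the exponential piece $\exp(\tfrac{k_2}{8}r_0^2)$ of $c_p$ both grow, while the admissible $k_3$ tends to $k_2$ but the negative exponential in $\lambda$ makes $\lambda$ itself decay. The value $r_0=2k_1/k_2$ is the simplest choice that simultaneously yields clean closed-form constants and keeps the rate $k_2/(2\e^{k_1^2/k_2})$ independent of $p$. One could in principle optimize $r_0$ further to sharpen the constants, but I would not pursue this since the stated form is already explicit enough to display the exponential contraction. The degenerate case $k_1=0$ reduces by a limiting argument (or by the classical synchronous coupling) to the standard rate $\e^{-k_2 t/(2p)}$, consistent with the formulas above.
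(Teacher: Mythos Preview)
Your proof is correct and follows essentially the same route as the paper: set $\theta=0$, $k_1(r)\equiv k_1$, choose $r_0=2k_1/k_2$ so that $k_3=k_2/2$ works in clause~(1) of Assumption~(A1), compute $\tfrac14\int_0^{r_0}k_1\,dr+\tfrac{k_2}{8}r_0^2=k_1^2/k_2$, and substitute into the constants of Theorem~\ref{main-them}. The extra commentary you add on why $r_0=2k_1/k_2$ is a natural choice and on the degenerate case $k_1=0$ is not in the paper's proof, but the argument itself is the same.
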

\begin{proof}
 By assumption, we have
 $$I^Z(x,y)\leq k_1-k_2\rho(x,y).$$
 Let $r_0=2k_1/k_2$. Then, for $r\geq r_0$, we have $k_1\leq k_2 r/2$, or equivalently,
 \begin{align*}
   k_1-k_2r\leq -\frac12k_2r.
 \end{align*}
 Thus, we find $k_3=k_2/2$ and
 \begin{align*}
   \lambda&=k_3\exp{\l(-\frac14\int_0^{r_0}k_1\,dr-\frac{k_2}{8}r_0^2\r)}=\frac{k_2}{2}\exp{\l(-\frac{k_1^2}{k_2}\r)}.
 \end{align*}
 Substituting the explicit constants in the results of Theorem
 \ref{main-them}, we complete the proof.
\end{proof}

 \begin{corollary}\label{cor2}
   Keeping the assumptions as in Theorem \ref{main-them}, we have
   \begin{align*}
     |\nabla P_{t}f|\leq c_1\e^{-\lambda t}\|\nabla f\|_{\infty}^{\mathstrut}
   \end{align*}
   for any $t\geq 0$ and any $f\in C_0^{\infty}(M)$, where $c_1$ and
   $\lambda$ are the constants given in Theorem \ref{main-them}.
 \end{corollary}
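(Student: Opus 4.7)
The plan is to derive the gradient estimate from the $W_1$-contraction inequality established in Theorem~\ref{main-them}(iii) via Kantorovich--Rubinstein duality. The key observation is that for Lipschitz $f$ with $\|\nabla f\|_\infty \leq 1$, the dual formulation of $W_1$ gives $|\mu_1(f) - \mu_2(f)| \leq W_1(\mu_1, \mu_2)$, which is exactly what links Wasserstein contraction to gradient bounds on the semigroup.

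First I would fix $f \in C_0^\infty(M)$ and take two points $x, y \in M$, then apply Theorem~\ref{main-them}(iii) with $\mu_1 = \delta_x$ and $\mu_2 = \delta_y$ to obtain
\begin{align*}
  W_1(\delta_x P_t, \delta_y P_t) \leq c_1 \e^{-\lambda t} W_1(\delta_x, \delta_y) = c_1 \e^{-\lambda t} \rho(x,y).
\end{align*}
Next I would apply the Kantorovich--Rubinstein duality: since $|f(u) - f(v)| \leq \|\nabla f\|_\infty\, \rho(u,v)$, the function $f/\|\nabla f\|_\infty$ is $1$-Lipschitz, so
\begin{align*}
  |P_t f(x) - P_t f(y)| = |\delta_x P_t(f) - \delta_y P_t(f)| \leq \|\nabla f\|_\infty\, W_1(\delta_x P_t, \delta_y P_t).
\end{align*}
Combining these two inequalities yields $|P_t f(x) - P_t f(y)| \leq c_1 \e^{-\lambda t} \|\nabla f\|_\infty \rho(x,y)$.

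Finally, I would divide both sides by $\rho(x,y)$ and let $y \to x$ along a geodesic; since $f$ is smooth the limit yields $|\nabla P_t f|(x) \leq c_1 \e^{-\lambda t} \|\nabla f\|_\infty$. There is no real obstacle here: the only mild point is ensuring that $P_t f$ is differentiable so that the limit $y\to x$ extracts the gradient, which follows from the smoothness of $f$ and the standard regularity of the heat semigroup generated by $L$ on $M$ (non-explosion is already assumed, and $f\in C_0^\infty(M)$ guarantees $P_t f$ is smooth). The argument is essentially the classical passage from $W_1$-contraction to a pointwise gradient estimate.
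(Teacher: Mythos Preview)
Your proof is correct and follows essentially the same route as the paper. The paper works directly with the coupling $(X_t,Y_t)$ from the proof of Theorem~\ref{main-them}, writing $P_tf(x)-P_tf(y)=\E^{(x,y)}[f(X_t)-f(Y_t)]$ and bounding the integrand by $\|\nabla f\|_\infty\,\rho(X_t,Y_t)$ together with $\E^{(x,y)}[\rho(X_t,Y_t)]\le c_1\e^{-\lambda t}\rho(x,y)$; your appeal to Theorem~\ref{main-them}\,(iii) plus Kantorovich--Rubinstein is exactly this argument repackaged in dual language.
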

 \begin{proof}
   For $f\in C_0^{\infty}(M)$, according to the definition of
   $|\nabla P_{t}f|$, we have
   \begin{align*}
     |\nabla P_{t}f|(x)
     &=\lim_{\rho(x,y)\rightarrow0}\l|\frac{P_{t}f(x)-P_{t}f(y)}{\rho(x,y)}\r|\\
     &=\lim_{\rho(x,y)\rightarrow0}\E^{(x,y)}\l[\frac{f(X_t)-f(Y_t)}{\rho(X_t,Y_t)}\,\frac{\rho(X_t,Y_t)}{\rho(x,y)}\r]\\
     &\leq c_1\e^{-\lambda t}\|\nabla f\|_{\infty}^{\mathstrut}
   \end{align*}
   for $t\geq 0$.
 \end{proof}

\section{Exponential contraction in Wasserstein distance on evolving manifolds}
In this section, we deal with the case that the underlying manifold
carries a geometric flow of complete Riemannian metrics. More precisely,
we consider a $d$-dimensional differentiable manifold $M$ equipped
with a $C^1$ family of complete Riemannian metrics
$(g_t)_{t\in (-\infty,T_c)}$ for some $T_c\in (-\infty,\infty]$.
We denote the interval $(-\infty,T_c)$ by
$I$.

We first give some quantitative results concerning exponential
contraction in Wasserstein distance over evolving manifolds. As
application, we use the $W_1$-contraction inequality to derive a
gradient inequality and uniqueness for the evolution system of
measure.

\subsection{Main results}
Let $\nabla^t$ be the Levi-Civita connection and $\Delta_t$ the
Laplace-Beltrami operator associated with the Riemannian metric $g_t$.
In addition, let $(Z_t)_{t\in [0,T_c)}$ be a $C^{1}$-family of vector
fields on $M$.  We set
\begin{align*}
  I^Z(t,x,y)=I(t,x,y)+\langle Z_t, \nabla^t\rho_t(\newdot,y) \rangle_t+\langle Z_t, \nabla^t \rho_t(\newdot,x) \rangle_t
\end{align*}
where
\begin{align*}
  I(t,x,y)=\int_0^{\rho_t(x,y)}\sum_{i=1}^{d-1}\l\{|\nabla^t_{\dot{\gamma}}J^t_i|_t^2-\langle R_t(\dot{\gamma},J^t_i)\dot{\gamma}, J^t_i \rangle_t \r\}(\gamma_s)+\partial_tg_t(\dot{\gamma},\dot{\gamma})(\gamma_s)\,ds.
\end{align*}
Now $\rho_t$ is the Riemannian distance, $R_t$ the Riemann tensor,
and $\gamma\colon[0,\rho_t(x,y)]\to M$ the minimal geodesic from $x$
to $y$ with unit speed, everything taken with respect to the
Riemannian metric $g_t$; in addition, $\{J_i^t\}_{i=1}^{d-1}$ are
Jacobi fields along $\gamma$ such that
\begin{align*}
  J^t_i(y)=P_{x,y}^tJ_i^t(x),\quad i=1,\ldots, d-1,
\end{align*}
in terms of the parallel transport
$P_{x,y}^t\colon T_xM \rightarrow T_yM$ along the geodesic $\gamma$,
and such that
$$\{\dot{\gamma}(s), J^t_i(s)\colon 1\leq i\leq d-1\},\quad s=0,\ \rho_t(x,y)$$ are
orthonormal bases of the tangent spaces $T_xM$, respectively $T_yM$,
with respect to $g_t$.

We first give a precise formulation of our assumptions in the time-dependent case.

\begin{ass2}
  {\it There exist a non-negative continuous function $k_1\in C(0,\infty)$, a positive constant $k_2$
    and a constant $\theta\geq 0$ such that
 \begin{align}\label{main-cond2}
 I^Z(t,x,y)\leq k_1(\rho_t(x,y))-k_2\rho_t(x,y)^{1+\theta}
 \end{align}
 and such that there exist positive constants $k_3$ ($k_3<k_2)$ and $r_0$ with the property:
 \begin{align*}
 k_1(r)-k_2r^{1+\theta}\leq -k_3r^{1+\theta},\quad r\geq r_0,
 \end{align*}
 and $\int_0^rk_1(v)\, dv<\infty$ for each $r>0$.}
\end{ass2}
Consider the operator $L_t=\Delta_t+Z_t$ where $Z_t$ is
   a family of vector fields which is $C^1$ in~$t$.
   Let $(X_t)$ be the diffusion process generated by $L_t$
   which is assumed to be non-explosive
   up to time $T_c$, and let
  $P_{s,t}$ be the corresponding time-inhomogeneous semigroup.

 \begin{theorem}\label{main-them2}
   Assume that Assumption $(A2)$ holds.  Then
   \begin{enumerate}[\rm(i)]
   \item for $x,y\in M$, $p\geq 1$ and $s\leq t< T_c$,
     \begin{align*}
       W_{p,t}(\delta_xP_{s,t},\delta_yP_{s,t})\leq c_p\e^{-\lambda(t-s)/p}(\rho_s(x,y)\vee\rho_s(x,y)^{1/p}),
     \end{align*}
     where
     \begin{align}
       c_p&=(1+r_0)^{(p-1)/p}\exp\l(\frac1{4p}\int_0^{r_0}k_1(r)\,dr+\frac{k_2}{8p}r_0^{2+\theta}\r),\label{cp}\\
       \lambda&=k_3r_0^\theta\exp\l(-\frac14\int_0^{r_0}k_1(r)\,dr-\frac{k_2}{8}r_0^{2+\theta}\r);\label{lambda}
     \end{align}
   \item for $s\leq t< T_c$, $p>1$ and $\mu_1,\mu_2\in \mathcal{P}(M)$, we
     have
     \begin{align*}
       \tilde{W}_{p,t}(\mu_1 P_{s,t}, \mu_2P_{s,t})\leq c_p \e^{-\lambda(t-s)/p} \tilde{W}_{p,s}(\mu_1, \mu_2),
     \end{align*}
     where
     $$\tilde{W}_{p,t}(\mu_1,\mu_2)=\inf_{\pi\in\mathcal{C}
       (\mu_1,\mu_2)}\l(\int_{M\times M} \rho_t(x,y)^p\vee \rho_t(x,y)
     \pi(d x, d y)\r)^{1/p};$$

   \item for $s\leq t< T_c$ and $\mu_1,\mu_2\in \mathcal{P}(M)$,
     \begin{align*}
       W_{1,t}(\mu_1P_{s,t},\mu_2P_{s,t})\leq c_1\e^{-\lambda (t-s)}W_{1,s}(\mu_1,\mu_2).
     \end{align*}
   \end{enumerate}
 \end{theorem}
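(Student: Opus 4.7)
The plan is to follow the blueprint of the proof of Theorem \ref{main-them}, adapting each step to the time-dependent metric $g_t$, and to reuse Lemma \ref{lem1} verbatim for the auxiliary function $\psi$ (which depends only on the scalar quantities $k_1, k_2, k_3, \theta, r_0$ coming from Assumption (A2), not on $g_t$).

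First I would build the coupling. Using the horizontal lift of $X_t$ in the $g_t$-orthonormal frame bundle over $M$ (see \cite{ACT08} for the construction of $L_t$-diffusions with time-dependent metrics), I decompose the driving Brownian motion into two independent pieces and couple $Y_t$ to $X_t$ by $g_t$-mirror reflection on the event $\{\rho_t(X_t,Y_t) \le r_0+1\}$ and by $g_t$-parallel transport on $\{\rho_t(X_t,Y_t) > r_0+1\}$, interpolated by the same cut-off $\sigma$ as before, now composed with $\rho_t(X_t,Y_t)$. Define the coupling time $T$ and set $X_t=Y_t$ for $t\ge T$; the technical cut-locus issues are handled exactly as in the static proof (see \cite[Sect.~3]{ATW:2006} and the adaptation to evolving metrics in \cite{ACT08}).

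Next I would apply the time-dependent Itô formula to $\rho_t(X_t,Y_t)$. The key point is that the extra term coming from the differentiation of the metric is precisely $\partial_t g_t(\dot\gamma,\dot\gamma)$ integrated along the minimal $g_t$-geodesic, which has already been absorbed into the index $I^Z(t,x,y)$ appearing in Assumption (A2). Consequently one obtains, for $t\le T$,
\begin{align*}
  d\rho_t(X_t,Y_t) &\le 2\sqrt{2}\,\sigma(\rho_t(X_t,Y_t))\,db_t + I^Z(t,X_t,Y_t)\,dt \\
  &\le 2\sqrt{2}\,\sigma(\rho_t(X_t,Y_t))\,db_t + \bigl(k_1(\rho_t(X_t,Y_t))-k_2\rho_t(X_t,Y_t)^{1+\theta}\bigr)\,dt,
\end{align*}
where $b_t$ is a one-dimensional Brownian motion. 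From here the computation of $d\rho_t(X_t,Y_t)^p$ and then of $d\psi(\rho_t(X_t,Y_t)^p)$ is formally identical to the static case, and Lemma \ref{lem1}(i),(iii) yields
\begin{align*}
  d\psi(\rho_t(X_t,Y_t)^p) \le dM_t - \lambda\,\psi(\rho_t(X_t,Y_t)^p)\,dt,\quad t\le T,
\end{align*}
for a local martingale $M_t$, with $\lambda$ given by \eqref{lambda}.

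Localizing by $\tau_n=\inf\{t\ge s:\rho_t(X_t,Y_t)\notin[1/n,n]\}$, taking expectations and then $n\to\infty$, I obtain the integral inequality $f(t)\le f(s)-\lambda\int_s^t f(r)\,dr$ for $f(t)=\E\psi(\rho_t(X_t,Y_t)^p)$, and the same continuity-comparison argument as in the static proof gives $\E\psi(\rho_t(X_t,Y_t)^p)\le e^{-\lambda(t-s)}\psi(\rho_s(x,y)^p)$. Combining with the two-sided bound $\tilde c_1 r^{1/p}\le \psi(r)\le \tilde c_2 r^{1/p}$ from Lemma \ref{lem1}(ii) and with the fact that $\psi(\rho_t(X_t,Y_t)^p)$ is bounded by $\psi((r_0+1)^p\vee\rho_s(x,y)^p)$ (because $\sigma\equiv 0$ and the drift of $\psi(\rho_t^p)$ is strictly negative on $\{\rho_t\ge r_0+1\}$) yields statement (i). Statements (ii) and (iii) follow by integrating (i) against an optimal coupling of $\mu_1,\mu_2$ for $\tilde W_{p,s}$ respectively $W_{1,s}$.

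The main obstacle, and the only genuinely new point, is the rigorous use of Itô's formula for the $g_t$-distance $\rho_t(X_t,Y_t)$ of two coupled $L_t$-diffusions, including the contribution of $\partial_t g_t$ and the management of the time-dependent cut-locus; once this is in place, the whole analytical skeleton (choice of $\psi$, ODE comparison, passage from $\E\rho_t$ to $\E\rho_t^p$) is inherited from the static proof with no essential change, because Lemma \ref{lem1} is a purely one-dimensional statement about $\psi$.
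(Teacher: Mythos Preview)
Your proposal is correct and follows essentially the same route as the paper's own proof: construct the mixed reflection/parallel coupling with respect to $g_t$, apply the time-dependent It\^o formula for $\rho_t(X_t,Y_t)$ (the paper cites \cite{Ch17} rather than \cite{ACT08} for this), and then reuse Lemma~\ref{lem1} and the localization/comparison argument from the static case verbatim. The paper in fact says explicitly ``The remaining steps are the same as in the proof of Theorem~\ref{main-them}.\ We skip the details,'' so your write-up is, if anything, more complete than the original.
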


 \begin{proof}
   Let $X_t$
   be the $L_t$-diffusion process, which we assume to be non-explosive.
   It is well known that the process $X_t$ solves the following SDE:
   \begin{align}\label{SDE-2}
     d_I X_t=\sqrt{2}u_t dB_t+Z_t(X_t)dt,\quad X_s=x,
   \end{align}
   where $(B_t)_{t\geq s}$ is a $d$-dimensional Brownian motion on
   $\R^d$.  Here $(u_t)_{t\geq s}$ is a horizontal lift of
   $(x_t)_{t\geq s}$ to the frame bundle over $M$ such that the parallel transport
   $u_tu_s^{-1}\colon (T_{x}M,g_s)\to (T_{X_t}M,g_t)$ along $x_t$ is isometric.
   We may rewrite SDE
   \eqref{SDE-2} as
   \begin{align*}
     d_I X_t=\sqrt{2}\l(\sigma(\rho_t(X_t,Y_t))\, u_t dB_t'+\sqrt{1-\sigma(\rho_t(X_t,Y_t))^2}\,u_t d B_t''\r)+Z_t(X_t)dt,
   \end{align*}
   where $B_t'$ and $B_t''$ are two independent Brownian motion on
   $\R^d$.  Recall that $\sigma\in C^1([0,\infty))$ is a function
   satisfying $0< \sigma\leq 1$ when $r\in (r_0,r_0+1)$, and
   $\sigma\equiv 0$ when $r\geq r_0+1$ and $\sigma\equiv1$ when
   $r\leq r_0$. Let $Y_t$ solve the following SDE on $M$ (with initial
   condition $Y_s=y$):
   \begin{align*}
     d_I Y_t&=\sqrt{2}\l(\sigma(\rho_t(X_t, Y_t))\,M^t_{X_t,Y_t}\,u_td B_t'+\sqrt{1-\sigma(\rho_t(X_t,Y_t))^2}\,P^t_{X_t,Y_t}u_td B_t''\r)+Z_t(Y_t)dt,
   \end{align*}
   where $P^t_{X_t,Y_t}$ and $M^t_{X_t,Y_t}$ denote respectively the parallel
   transport and the mirror reflection along the $g_t$-geodesic connecting
   $X_t$ and $Y_t$ with respect to the metric $g_t$.  Since the
   coefficients of the SDE are at least $C^1$ outside the diagonal
   $\{(z,z)\colon z\in M\}$, it has a unique solution up to the
   coupling time
$$T:=\inf\{t\geq s\colon X_t=Y_t\}.$$
Let $X_t=Y_t$ for $t\geq T$ as usual.  Then, by It\^{o}'s formula
(see \cite{Ch17}), we have
\begin{align*}
  d \rho_t(X_t,Y_t)&\leq 2\sqrt{2}\,d b_t+I^Z(t,X_t,Y_t)dt\\
                   &\leq 2\sqrt{2}\, d b_t+ (k_1(\rho_t(X_t,Y_t))-k_2\rho_t(X_t,Y_t)^{1+\theta})\, dt,\quad t\leq T,
\end{align*}
where $b_t$ is a one-dimensional Brownian motion on $\R$.
Moreover,
\begin{align*}
  d \rho_t(X_t,Y_t)^p&\leq p\rho_t(X_t,Y_t)^{p-1}\, d\rho_t(X_t,Y_t)+\frac12p(p-1)\rho_t(X_t,Y_t)^{p-2}\,d \langle \rho\rangle_t \\
                     & \leq  p \rho_t(X_t,Y_t)^{p-1}\l\{2\sqrt{2}\,d b_t+\l(k_1(\rho_t(X_t,Y_t))-k_2\rho_t(X_t,Y_t)^{1+\theta}\r)\,dt\r\}\\
                     & \quad + 4p(p-1)\rho_t(X_t,Y_t)^{p-2}\,dt.
\end{align*}
Then, by the It\^{o} formula for $\psi(\rho_t(X_t,Y_t)^p)$, we have
\begin{align*}
  d\psi(\rho_t(X_t,Y_t)^p)&\leq\psi'(\rho_t(X_t,Y_t)^{p})\l(2\sqrt{2}p\rho_t(X_t,Y_t)^{p-1}\, d b_t+\ell_1(\rho_t(X_t,Y_t)^p)\, dt\r)\\
                              &\quad+\psi''(\rho_t(X_t,Y_t)^p)\ell_0(\rho_t(X_t,Y_t)^p)\,dt\\
  &= dM_t -\ell(\rho_t(X_t,Y_t)^p)\psi'(\rho_t(X_t,Y_t)^p)\,dt
\end{align*}
where
\begin{align*}
  d M_t=2\sqrt{2}p\psi'(\rho_t(X_t,Y_t)^p)\rho_t(X_t,Y_t)^{p-1}\, d b_t.
\end{align*}
The remaining steps are the same as in the proof of Theorem
\ref{main-them}. We skip the details.
\end{proof}

 \begin{remark}
   It is natural to ask whether contraction in Wasserstein
   distance still holds when the curvature condition \eqref{main-cond2} is
   weakened as follows: there exist non-negative functions
   $k_1, k_2\in C^1(I)$ and $\phi\in C([0,\infty))$ such that
   \begin{align}\label{general-condition}
     I^Z(t,x,y)\leq k_1(t)\phi(\rho_t(x,y))-k_2(t)\rho_t(x,y)^{1+\theta}.
   \end{align}
   A possible way to deal with this case is to prove the result for
   each interval $[s,t]$. Assume that for an interval
   $[s,t]\subset I$,
   \begin{align*}
     I^Z(u,x,y)\leq k_1(s,t)\phi(\rho_u(x,y))-k_2(s,t)\rho_u(x,y)^{1+\theta},\quad
     u\in [s,t],
   \end{align*}
   and there exist $k_3(s,t)$ and $r_0(s,t)$ such that
   \begin{align*}
     & k_1(s,t)\phi(r)-k_2(s,t)r^{1+\theta}\leq -k_3(s,t)r^{1+\theta},\quad r\geq r_0(s,t)
   \end{align*}
   and $\int_0^r \phi(u)\,du<\infty$ for $r>0$. Then, by an analogous
   procedure as in the proof of Theorem~\ref{main-them2}, we get
   \begin{align*}
     W_{p,t}(\delta_xP_{s,t},\delta_yP_{s,t})
     \leq c_p(s,t) \e^{-\lambda(s,t)(t-s)/p}\big(\rho_s(x,y)\vee\rho_s(x,y)^{1/p}\big).
   \end{align*}
   Hence, if the coefficient
   $c_p(s,t) \e^{-\lambda(s,t)(t-s)/p}$ converges to $0$, as $t-s\rightarrow \infty$, we still have
   contraction of the Wasserstein distance $\tilde{W}_{p,t}$.
 \end{remark}

 Assume that ${\Ric}_t^Z\geq k(\rho_t)$ and
 $\lim\inf_{r\rightarrow \infty}k(r)>0$.  Then there exist positive
 constants $k_1$ and $k_2$ such that
 \begin{align*}
   I(t,x,y)\leq k_1-k_2 \rho_t(x,y).
 \end{align*}
 In this case, the following corollary follows directly from Theorem \ref{main-them2}.
 
 \begin{corollary}\label{main-theo-3}
   Suppose that
   \begin{align}\label{Cond:IZt}
     I^Z(t,x,y)\leq k_1-k_2\rho_t(x,y),\quad t\in I
   \end{align}
   for some non-negative constant $k_1$ and positive constant
   $k_2$. Then,
   \begin{enumerate}[\rm(i)]
   \item for $p>1$, $s\leq t<T_c$, and $x,y \in M$,
     \begin{align*}
       W_{p,t}(\delta_xP_{s,t}, \delta_yP_{s,t})&\leq \l(1+\frac{2k_1}{k_2}\r)^{(p-1)/p}\exp\l(\frac{k_1^2}{p k_2}-\frac{k_2}{2p\e^{k_1^2/k_2}}(t-s)\r)\\
       &\qquad\times(\rho_s(x,y)\vee\rho_s(x,y)^{1/p});
     \end{align*}
   \item for $s\leq t<T_c$, $p>1$ and $\mu_1, \mu_2\in \mathcal{P}(M)$,
     \begin{align*}
       \tilde{W}_{p,t}(\mu_1 P_{s,t}, \mu_2P_{s,t})\leq \l(1+\frac{2k_1}{k_2}\r)^{(p-1)/p}\exp\l(\frac{k_1^2}{pk_2}-\frac{k_2}{2p\e^{k_1^2/k_2}}(t-s)\r)\tilde{W}_{p,s}(\mu_1,\mu_2)
     \end{align*}
     where
 $$ \tilde{W}_{p,s}(\mu_1,\mu_2)=\inf_{\pi\in \mathcal{C}(\mu_1,\mu_2)}\l(\int_{M\times M} \rho_s(x,y)^p\vee \rho_s(x,y) \pi(d x, d y)\r)^{1/p};$$
\item in particular, for $s\leq t<T_c$,
  \begin{align*}
    \E^{(x,y)}\rho_t(X_t,Y_t)\leq  \exp\l(\frac{k_1^2}{k_2}-\frac{k_2}{2\e^{k_1^2/k_2}}(t-s)\r)\rho_s(x,y).
  \end{align*}
\end{enumerate}
\end{corollary}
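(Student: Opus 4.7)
The plan is to obtain Corollary \ref{main-theo-3} as a direct specialization of Theorem \ref{main-them2}, by identifying admissible parameters in Assumption (A2) that match the linear bound \eqref{Cond:IZt}. First I would observe that \eqref{Cond:IZt} is precisely \eqref{main-cond2} with the constant function $k_1(r)\equiv k_1$ and exponent $\theta=0$, so the only thing left to arrange is the tail condition $k_1-k_2r\leq -k_3r$ with some $k_3<k_2$. The natural choice is $r_0=2k_1/k_2$: for $r\geq r_0$ one has $k_1\leq k_2 r/2$, which yields the tail bound with $k_3=k_2/2$ (and clearly $k_3<k_2$).

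The main content is then a bookkeeping computation of the explicit constants $c_p$ in \eqref{cp} and $\lambda$ in \eqref{lambda}. With $\theta=0$ and $r_0=2k_1/k_2$ one computes
\[
\int_0^{r_0} k_1\, dr=\frac{2k_1^2}{k_2},\qquad \frac{k_2}{8}\,r_0^{2+\theta}=\frac{k_1^2}{2k_2},
\]
so the two terms inside the exponential of $c_p$ both equal $k_1^2/(2pk_2)$ and sum to $k_1^2/(pk_2)$, producing
\[
c_p=\left(1+\frac{2k_1}{k_2}\right)^{(p-1)/p}\exp\left(\frac{k_1^2}{pk_2}\right).
\]
Analogously, the exponential in $\lambda$ collapses to $\exp(-k_1^2/k_2)$, giving $\lambda=(k_2/2)\exp(-k_1^2/k_2)$ and therefore $\lambda/p=k_2/(2p\e^{k_1^2/k_2})$. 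Substituting these values into parts (i) and (ii) of Theorem \ref{main-them2} immediately yields the inequalities stated in (i) and (ii) of the corollary.

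For part (iii), I would specialize Theorem \ref{main-them2}(iii) to $\mu_1=\delta_x$ and $\mu_2=\delta_y$. Since $W_{1,s}(\delta_x,\delta_y)=\rho_s(x,y)$, and since the coupling $(X_t,Y_t)$ produced in the proof of Theorem \ref{main-them2} realizes the expectation $\E^{(x,y)}\rho_t(X_t,Y_t)$ as an upper bound for $W_{1,t}(\delta_x P_{s,t},\delta_y P_{s,t})$, one obtains $\E^{(x,y)}\rho_t(X_t,Y_t)\leq c_1\e^{-\lambda(t-s)}\rho_s(x,y)$. The $p=1$ case of the computation above gives $c_1=\exp(k_1^2/k_2)$, and combined with $\lambda=(k_2/2)\exp(-k_1^2/k_2)$ this is exactly the announced estimate.

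The only mildly delicate point is the choice of $r_0=2k_1/k_2$ and the observation that with this choice the two parts of the exponent in $c_p$ happen to coincide, which is what makes the formulas collapse to the clean expressions stated in the corollary. Beyond this identification of constants, no further analytic work is required, and the corollary is purely a consequence of Theorem \ref{main-them2}.
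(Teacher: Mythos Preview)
Your approach is essentially the paper's own: the paper gives no separate proof for Corollary~\ref{main-theo-3} (it is stated as ``follows directly from Theorem~\ref{main-them2}''), but the identical static version, Corollary~\ref{cor1}, is proved exactly by your choice $r_0=2k_1/k_2$, $k_3=k_2/2$, $\theta=0$ and the resulting computation of the exponents. Your bookkeeping for $c_p$ and $\lambda$ is correct.

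One point in part~(iii) needs tightening. You write that the coupling ``realizes $\E^{(x,y)}\rho_t(X_t,Y_t)$ as an upper bound for $W_{1,t}$'' and then deduce the bound on $\E^{(x,y)}\rho_t(X_t,Y_t)$. But the inequality $W_{1,t}\leq \E\rho_t$ goes the wrong way for that deduction: from Theorem~\ref{main-them2}\,(iii) alone you only get a bound on $W_{1,t}$, not on the expectation of the specific coupling. The estimate on $\E^{(x,y)}\rho_t(X_t,Y_t)$ is instead a direct consequence of the \emph{proof} of Theorem~\ref{main-them2}, namely the time-inhomogeneous analogue of inequality~\eqref{est-radial-process}, which already bounds the expectation by $(\tilde c_2/\tilde c_1)\e^{-\lambda(t-s)}\rho_s(x,y)=c_1\e^{-\lambda(t-s)}\rho_s(x,y)$. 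So for (iii) you should invoke that intermediate estimate from the proof rather than the $W_1$ statement.
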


We now apply Theorem \ref{main-them2}\,(iii) to derive gradient estimates for the 2-parameter
semigroup $P_{s,t}$.
\begin{corollary}\label{cor3}
  Under the same conditions as in Theorem \ref{main-them2}, we have
  \begin{align*}
    |\nabla^s P_{s,t}f|_s\leq c_1\e^{-\lambda(t-s)}\l\||\nabla^t f|_t\r\|_{\infty}
  \end{align*}
  for any $s\leq t $ and any $f\in C_0^{\infty}(M)$, where $c_1$ and
  $\lambda$ are defined as in \eqref{cp} and \eqref{lambda}
  respectively.
\end{corollary}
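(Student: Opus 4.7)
The plan is to mirror the proof of Corollary~\ref{cor2} in the time-inhomogeneous setting, with the single-parameter semigroup $P_t$ and the distance $\rho$ replaced by the two-parameter semigroup $P_{s,t}$ and the time-dependent distances $\rho_s$ (for the initial point) and $\rho_t$ (for the terminal point). For fixed $s\leq t$ and $f\in C_0^\infty(M)$, the idea is to realize $|\nabla^s P_{s,t}f|_s(x)$ as a limit of difference quotients taken with respect to $\rho_s$, and then to control the numerator by means of the coupling $(X_t,Y_t)_{t\geq s}$ already constructed in the proof of Theorem~\ref{main-them2}.

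Concretely, I would first write
$$|\nabla^s P_{s,t}f|_s(x)=\lim_{\rho_s(x,y)\to 0}\frac{|P_{s,t}f(x)-P_{s,t}f(y)|}{\rho_s(x,y)}.$$
Next, using the coupling $(X_t,Y_t)$ with $X_s=x$, $Y_s=y$ from Theorem~\ref{main-them2}, and the fact that any $f\in C_0^\infty(M)$ is Lipschitz for $\rho_t$ with constant at most $\bigl\||\nabla^t f|_t\bigr\|_\infty$, I would estimate
$$|P_{s,t}f(x)-P_{s,t}f(y)|=\bigl|\E^{(x,y)}[f(X_t)-f(Y_t)]\bigr|\leq \bigl\||\nabla^t f|_t\bigr\|_\infty\,\E^{(x,y)}[\rho_t(X_t,Y_t)].$$
Since $(X_t,Y_t)$ is a coupling of $\delta_xP_{s,t}$ and $\delta_yP_{s,t}$, applying Theorem~\ref{main-them2}\,(iii) with $\mu_1=\delta_x$ and $\mu_2=\delta_y$ — noting that $W_{1,s}(\delta_x,\delta_y)=\rho_s(x,y)$ — yields
$$\E^{(x,y)}[\rho_t(X_t,Y_t)]\leq c_1\,\e^{-\lambda(t-s)}\rho_s(x,y).$$
Combining the two inequalities, dividing by $\rho_s(x,y)$, and letting $\rho_s(x,y)\to 0$ delivers the claim.

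No substantial obstacle is expected: the argument is a direct transcription of the proof of Corollary~\ref{cor2}, since all of the real work — the coupling construction and the $W_{1,t}$-contraction estimate — has already been carried out in the proof of Theorem~\ref{main-them2}. The only subtle point is conceptual rather than technical: the gradient on the left is measured with respect to the initial metric $g_s$, while the supremum-norm of the gradient on the right is measured with respect to the terminal metric $g_t$. This asymmetry is exactly matched by the $W_{1,s}\to W_{1,t}$ form of the contraction in Theorem~\ref{main-them2}\,(iii), so the interplay between the two time-dependent metrics raises no additional difficulty.
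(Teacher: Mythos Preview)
Your proposal is correct and follows the paper's proof essentially verbatim: both realize $|\nabla^s P_{s,t}f|_s(x)$ as a limit of difference quotients in $\rho_s$, insert the coupling $(X_t,Y_t)$ from the proof of Theorem~\ref{main-them2}, and bound $\E^{(x,y)}[\rho_t(X_t,Y_t)]/\rho_s(x,y)$ by $c_1\e^{-\lambda(t-s)}$. The only nitpick is that Theorem~\ref{main-them2}\,(iii) as stated bounds $W_{1,t}$, which is an infimum over couplings and hence does not directly control $\E^{(x,y)}[\rho_t(X_t,Y_t)]$ for the \emph{particular} coupling you chose; you should instead cite the proof of the theorem (the time-inhomogeneous analogue of estimate~\eqref{est-radial-process}) for that bound, or alternatively drop the explicit coupling altogether and pass from the Lipschitz bound on $f$ to $W_{1,t}(\delta_xP_{s,t},\delta_yP_{s,t})$ via Kantorovich--Rubinstein duality.
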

\begin{proof}
  For $f\in C_0^{\infty}(M)$, according to the definition of
  $\nabla^sP_{s,t}f$, we have for $s\le t$,
  \begin{align*}
    |\nabla^sP_{s,t}f|_s(x)&=\lim_{\rho_s(x,y)\rightarrow0}\l|\frac{P_{s,t}f(x)-P_{s,t}f(y)}{\rho_s(x,y)}\r|\\
                           &=\lim_{\rho_s(x,y)\rightarrow0}\E^{(s,(x,y))}\l(\frac{f(X_t)-f(Y_t)}{\rho_t(X_t,Y_t)}\,\frac{\rho_t(X_t,Y_t)}{\rho_s(x,y)}\r)\\
                           &\leq c_1\e^{-\lambda(t-s)}\l\||\nabla^tf|_t\r\|_{\infty}.\qedhere
  \end{align*}
\end{proof}

\subsection{Applications}

Let us first recall the notion of an evolution system of measures for
a 2-parameter semigroup.  A family of Borel probability measures
$(\mu_t)_{t\in I}$ on $M$ is called an evolution system of measures
for $P_{s,t}$ (see \cite{DaR08}) if
$$\int_M P_{s,t}\phi d\mu_s=\int \phi d\mu_t,\quad \phi\in
\mathscr{B}_b(M)$$ for $s\leq t< T_c.$ In \cite{ChT18}, we
investigated existence and uniqueness of evolution systems of
measures. The condition for uniqueness (H3) in \cite[Theorem
2.3]{ChT18} requires that the lower bound of
$ \Ric_t^Z-\frac12\partial_tg_t$ depends only on time $t$ and
satisfies an integrability condition. Here we give another condition
in terms of lower bounds on $ \Ric_t^Z-\frac12\partial_tg_t$ depending
on the radial distance $\rho_t$.

 \begin{theorem}\label{cor4}
   Suppose that there exists a function $k\in C([0,\infty))$
   with $\liminf_{s\rightarrow\infty}k(s)>0$ such that
   \begin{align}\label{RicZ-t}
     \Ric_t^Z-\frac12\partial_tg_t\geq k(\rho_t)
   \end{align}
   and that there exist
   $\varepsilon>0$ and $x_0\in M$ such that for some constant $C$,
 $$3k_{\varepsilon}(t)\varepsilon+|Z_t|_t(x_0)\leq C,\quad t\in I,$$
  where
 \begin{align}\label{Eq:k_epsilon}
   k_{\varepsilon}(t):=\sup\{\Ric_t(x)\colon\rho_t(x_0,x)\leq \varepsilon\}.
 \end{align}
 Then there exists a unique evolution system of measures
 $(\mu_s)_{s\in I}$ for $P_{s,t}$.
\end{theorem}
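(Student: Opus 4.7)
The plan is to combine the $W_{1,t}$-contraction provided by Corollary \ref{main-theo-3}(iii) with a uniform first-moment bound derived from the auxiliary condition $3k_{\varepsilon}(t)\varepsilon+|Z_t|_t(x_0)\leq C$. First, the radial Ricci hypothesis \eqref{RicZ-t} together with $\liminf_{r\to\infty}k(r)>0$ implies, exactly as in Remark \ref{remark} applied on each $(M,g_t)$, that one can extract constants $k_1\geq 0$ and $k_2>0$, independent of $t\in I$, such that
\begin{align*}
  I^Z(t,x,y)\leq k_1-k_2\rho_t(x,y),\quad t\in I,\ x,y\in M.
\end{align*}
Corollary \ref{main-theo-3}(iii) then yields the $W_{1,t}$-contraction $W_{1,t}(\mu P_{s,t},\nu P_{s,t})\leq c_1\e^{-\lambda(t-s)}W_{1,s}(\mu,\nu)$ for all $\mu,\nu\in\sP(M)$ and $s\leq t<T_c$, with explicit positive constants $c_1,\lambda$.

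The core step is to prove the uniform first-moment bound
\begin{align*}
  \sup_{s\in I}\int_M\rho_s(x_0,x)\,\mu_s(d x)<\infty
\end{align*}
for every evolution system of measures $(\mu_s)_{s\in I}$. Using the intertwining relation $\mu_t=\mu_sP_{s,t}$, this reduces to controlling $\E^{(s,y)}[\rho_t(x_0,X_t)]$ uniformly in $s\leq t$ at a fixed reference point $y$, where $X_t$ denotes the $L_t$-diffusion. I would proceed by applying It\^o's formula to $\rho_t(x_0,X_t)$: the radial curvature lower bound combined with the Laplacian comparison theorem produces a strongly contracting drift once $\rho_t(x_0,X_t)$ exceeds some threshold, while inside the ball of $g_t$-radius $\varepsilon$ around $x_0$ the non-smoothness of $\rho_t(x_0,\newdot)$ at $x_0$ itself forces correction terms that are controlled precisely by $\sup_{B_t(x_0,\varepsilon)}\Ric_t\cdot\varepsilon$ and $|Z_t|_t(x_0)$---this is where the hypothesis $3k_\varepsilon(t)\varepsilon+|Z_t|_t(x_0)\leq C$ enters. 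Together with the standard treatment of the cut-locus $\Cut_t(x_0)$, these ingredients yield a one-dimensional comparison inequality of Ornstein--Uhlenbeck type,
\begin{align*}
  d\rho_t(x_0,X_t)\leq \sqrt{2}\,db_t+\bigl(\tilde C-\tilde\lambda\,\rho_t(x_0,X_t)\bigr)\,dt,
\end{align*}
with $\tilde C,\tilde\lambda>0$ independent of the starting time, from which the uniform moment bound follows at once.

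With the moment bound in hand, uniqueness is immediate: for any two evolution systems $(\mu_s),(\nu_s)$, the triangle inequality through $\delta_{x_0}$ gives $W_{1,s}(\mu_s,\nu_s)\leq 2M$, and then the contraction yields
\begin{align*}
  W_{1,t}(\mu_t,\nu_t)=W_{1,t}(\mu_sP_{s,t},\nu_sP_{s,t})\leq 2c_1M\,\e^{-\lambda(t-s)},
\end{align*}
so letting $s\to-\infty$ forces $\mu_t=\nu_t$. Existence, if not already covered by earlier results of the paper, can be obtained by the same mechanism: by the same two-sided argument, the net $\{\delta_{x_0}P_{s,t}\}_{s\leq t}$ is $W_{1,t}$-Cauchy as $s\to-\infty$, and its $W_{1,t}$-limit satisfies the evolution system property.

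The main obstacle is the moment bound. Handling the singularity of $\rho_t(x_0,\newdot)$ at $x_0$---which is precisely the reason $k_\varepsilon$ appears in the hypothesis---and at the time-dependent cut-locus is delicate; the cleanest route is the Kendall-type device of replacing $\rho_t(x_0,\newdot)$ outside $B_t(x_0,\varepsilon)$ by a smooth upper bound coming from a radial comparison model, and verifying that the error contributions from the small ball and from $\Cut_t(x_0)$ are both absorbed into the constant $C$ of the hypothesis.
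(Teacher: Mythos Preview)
Your proposal is correct and follows the same overall strategy as the paper: derive $I^Z(t,x,y)\leq k_1-k_2\rho_t(x,y)$ from the radial curvature hypothesis, invoke the $W_1$-contraction of Corollary~\ref{main-theo-3}, and combine this with a uniform moment bound coming from a Lyapunov-type inequality in which the auxiliary condition on $k_\varepsilon$ and $|Z_t|_t(x_0)$ controls the behaviour near~$x_0$.

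The paper differs from your outline only in execution. It works with $\rho_t^2$ rather than $\rho_t$: using the comparison estimate \cite[Lemma~9]{KP} for $(L_t+\partial_t)\rho_t$ it obtains $(L_t+\partial_t)\rho_t^2\leq C_1-C_2\rho_t^2$, and then invokes \cite[Theorem~2.3]{ChT18} as a black box for both existence of an evolution system and the second-moment bound $\sup_s\mu_s(\rho_s^2)\leq C_1/C_2$. For uniqueness, rather than bounding $W_{1,t}(\mu_t,\nu_t)$ directly as you do, the paper shows the pointwise limit $\lim_{s\to-\infty}P_{s,t}f(x_0)=\mu_t(f)$ for Lipschitz $f$ and then compares two systems via the triangle inequality through $P_{s,t}f(x_0)$. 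Both routes use the same ingredients (contraction plus moment bound) and are essentially equivalent; your direct Wasserstein argument and the self-contained Cauchy construction for existence are reasonable alternatives to citing the external result. One small imprecision: your reduction ``to controlling $\E^{(s,y)}[\rho_t(x_0,X_t)]$ at a fixed reference point $y$'' should really be a bound for all $y$ with at most linear growth in $\rho_s(x_0,y)$, which is exactly what your OU comparison delivers.
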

\begin{proof}
  First of all, by \cite[Lemma 9]{KP}, we have
  \begin{align*}
    (L_t+\partial_t)\rho_t^2&=2\rho_t(L_t+\partial_t)\rho_t+2\\
                            &\leq 2\left(F_t(\rho_t)-\int_0^{\rho_t}k(\rho_t(\gamma(s)))\,ds+|Z_t(x_0)|_t\right)\rho_t+2,
  \end{align*}
  where
  $$F_t(s)=\sqrt{k_{\varepsilon}(t)(d-1)}\coth\l(\sqrt{k_{\varepsilon}(t)/(d-1)}(s\wedge
  \varepsilon)\r)+k_{\varepsilon}(t)(s\wedge \varepsilon)$$ and
  $k_{\varepsilon}(t)$ is given by Eq.~\eqref{Eq:k_epsilon}.
As $\Ric_t^Z-\frac12\partial_tg_t\geq k(\rho_t)$
  and $\liminf_{s\rightarrow\infty}k(s)>0$, the function
  $k$ is bounded below and there exist constants $r_0>0$ and $\kappa>0$ such that for
  $r\geq r_0$,
  \begin{align*}
    k(r)\geq\kappa>0.
  \end{align*}
  By straightforward estimates, using the obvious inequality
  $\coth(x)\leq1+\frac1x$, we obtain
  \begin{align*}
    (L_t+\partial_t)\rho_t^2\leq 2d+\l(3k_{\varepsilon}(t)\epsilon+|Z_t|_t(x_0)+3(d-1)\varepsilon^{-1} \r)\rho_t
                              +c\rho_t-2\kappa \rho_t^2.
  \end{align*}
  Thus, if $3k_{\varepsilon}(t)\epsilon+|Z_t|_t(x_0)\leq C$,
  we can find constants $C_1$ and $C_2$ such that
  \begin{align*}
    (L_t+\partial_t)\rho_t^2\leq C_1-C_2\rho_t^2.
  \end{align*}
  Therefore, by \cite[Theorem 2.3]{ChT18}, there exists an
  evolution system of measures $(\mu_s)$ such that
  \begin{align*}
    \sup_{s\in (-\infty,t]} \mu_s(\rho_s^2)\leq \frac{C_1}{C_2}<\infty.
  \end{align*}
  Recall that $\Ric_t^Z-\frac12\partial_tg_t\geq k(\rho_t)$ with $k(r)>\kappa>0$ for $r_0>0$.
  Moreover, given condition \eqref{RicZ-t}, we know that there exist positive constants $k_1$ and $k_2$ such that
  \begin{align*}
    I(t,x,y)\leq - \int_0^{\rho_t(x,y)}\l(\Ric_t^Z-\frac12\partial_tg_t\r)(\dot{\gamma}(s),\dot{\gamma}(s))ds
    \leq k_1-k_2 \rho_t(x,y).
  \end{align*}
  Hence condition \eqref{Cond:IZt} in Corollary \ref{main-theo-3} is satisfied, and by
  this corollary, there exist constants $c_1$ and $\lambda$ depending
  on $k_1$ and $k_2$ such that
  \begin{align*}
    |P_{s,t}f(x_0)-\mu_t(f)|&=\l|\int (P_{s,t}f(x_0)-P_{s,t}f(y))\mu_s(d y)\r|\\
                            &=\l|\int \E^{(s,(x_0,y))}\l[\frac{f(X_t)-f(Y_t)}{\rho_t(X_t,Y_t)}\rho_t(X_t,Y_t)\r]\mu_s(dy)\r|\\
                            &\leq \big\||\nabla^t f|_t\big\|_{\infty}\int\E^{(s,(x_0,y))}[\rho_t(X_t,Y_t)]\,\mu_s(dy)\\
                            &\leq c_1\e^{-\lambda(t-s)}\big\||\nabla^tf|_t\big\|_{\infty}\,\mu_s(\rho_s)\\
                            &\leq c_1\e^{-\lambda(t-s)}\big\||\nabla^tf|_t\big\|_{\infty}\sqrt{C_1/C_2},
  \end{align*}
  which implies
  \begin{align*}
    \lim_{s\rightarrow -\infty}|P_{s,t}f(x_0)-\mu_t(f)|=0.
  \end{align*}
  If there is another evolution system of measures $\nu_t$, then
  \begin{align*}
    |\mu_t(f)-\nu_t(f)|\leq \lim_{s\rightarrow -\infty}(|P_{s,t}f(x_0)-\mu_t(f)|+|P_{s,t}f(x_0)-\nu_t(f)|)=0,
  \end{align*}
  i.e. $\mu_t\equiv \nu_t$. This finishes the proof.
\end{proof}

\begin{remark}\label{compare-conditions}
  Comparing the above conditions to \cite[Theorem
  2.3]{ChT18}, we note that the function
  $k(r)$ is only required to be positive outside a compact set.
  If $k(r)$ is not bounded below by zero, the situation is not
  covered by \cite[Theorem 2.3]{ChT18}.
\end{remark}

It is well known that evolution systems of measures play a similar
role in the inhomogeneous setting as invariant measures for
homogeneous semigroups $P_t$. Inspired by this, we take the
system $(\mu_s)_{s\in I}$ as reference measures and study the
contraction properties of the two-parameter semigroup $P_{s,t}$.

\begin{theorem}\label{supercontractive-them}
  Keeping the assumptions of Theorem \ref{cor4} and assuming that
  $\mu_s(\e^{\varepsilon \rho_s})<\infty$ for any $\varepsilon>0$, the
  semigroup $P_{s,t}$ is supercontractive.
\end{theorem}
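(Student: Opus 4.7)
The plan is to derive supercontractivity from a dimension-free Harnack inequality of Wang type, combined with the exponential integrability hypothesis on the evolution system $(\mu_s)$. First I would establish
\begin{equation*}
(P_{s,t}f(x))^p \leq (P_{s,t}f^p(y))\exp\bigl(\Phi_p(t-s,\rho_s(x,y))\bigr), \quad p>1,\ f\geq 0,\ x,y\in M,
\end{equation*}
with an exponent of the form $\Phi_p(h,r)\leq A_p(h)+B_p(h)\,r$, i.e.\ growing at most linearly in $r$. To this end, for fixed $s<t<T_c$, I would build a coupling $(X_u,Y_u)_{u\in[s,t]}$ of $L_u$-diffusions starting at $(x,y)$, driven by the same Brownian motion via parallel transport along the $g_u$-geodesic joining $X_u$ and $Y_u$, and equip $Y_u$ with an additional radial drift $h(u)$ chosen so that $Y_t=X_t$. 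A Girsanov change of measure then yields the Harnack inequality, with exponent proportional to the expected squared-drift integral $\E\int_s^t|h(u)|^2\,du$; the confining effect of the distance-dependent lower curvature bound (as exploited in the radial process estimates of Theorem~\ref{main-them2}) forces this integral to grow only linearly in $\rho_s(x,y)$.

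Given the Harnack inequality, supercontractivity will follow by a standard integration argument. Integrating the Harnack bound against $\mu_s(dy)$ and invoking the evolution system identity $\int P_{s,t}(f^p)\,d\mu_s=\mu_t(f^p)$ yields the pointwise estimate
\begin{equation*}
(P_{s,t}f(x))^p \leq \mu_t(f^p)\,H(x,s,t,p), \quad H(x,s,t,p):=\left(\int_M \e^{-\Phi_p(t-s,\rho_s(x,y))}\,\mu_s(dy)\right)^{-1}.
\end{equation*}
The triangle inequality $\rho_s(x,y)\leq\rho_s(x,x_0)+\rho_s(x_0,y)$ combined with the linear growth of $\Phi_p$ shows that the integral in $H$ is bounded below by a constant multiple of $\e^{-B_p(t-s)\rho_s(x,x_0)}\,\mu_s\bigl(\e^{-B_p(t-s)\rho_s(\cdot,x_0)}\bigr)$, the latter factor being positive and finite. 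Hence $H(x,s,t,p)\leq C(s,t,p)\,\e^{B_p(t-s)\rho_s(x,x_0)}$. Raising to the power $q/p$ and integrating against $\mu_s$, the hypothesis $\mu_s(\e^{\varepsilon\rho_s})<\infty$ for all $\varepsilon>0$ gives $\mu_s(H^{q/p})<\infty$, whence
\begin{equation*}
\|P_{s,t}f\|_{L^q(\mu_s)} \leq C(s,t,p,q)\,\|f\|_{L^p(\mu_t)}, \quad 1<p<q<\infty,
\end{equation*}
which is the supercontractivity of $P_{s,t}$.

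The main obstacle is the first step, namely deriving the Harnack inequality with \emph{linear} growth of the exponent in $\rho_s(x,y)$. A standard Wang-type construction under a constant lower Ricci bound produces quadratic growth, which would demand $\mu_s(\e^{\lambda\rho_s^2})<\infty$ and is incompatible with the present hypothesis. The positivity of the curvature at infinity, $\liminf_{r\to\infty}k(r)>0$, is what makes linear growth possible: it allows the coupling drift $h$, and consequently the Girsanov density, to be controlled linearly in $\rho_s(x,y)$ via estimates in the spirit of the auxiliary function $\psi$ from Lemma~\ref{lem1}.
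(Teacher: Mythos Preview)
Your overall architecture---Harnack inequality followed by integration against $\mu_s$---matches the paper's, but you diverge at the key technical input. The paper does \emph{not} obtain a Harnack inequality with linear growth in $\rho_s(x,y)$; it establishes in Lemma~\ref{lem1a} the standard Wang-type inequality under $I^Z(t,x,y)\leq k_1-k_2\rho_t(x,y)$, whose exponent carries the quadratic term $\dfrac{2k_2\rho_s(x,y)^2}{\e^{2k_2(t-s)}-1}$. Integrating over $y$ restricted to the unit ball $B_s(x_0,1)$ then yields $|P_{s,t}f|(x)\leq C\,\e^{\lambda\rho_s(x)^2}$, and supercontractivity follows once $\mu_s(\e^{\lambda q\rho_s^2})<\infty$. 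The paper's proof therefore actually invokes a \emph{quadratic} exponential moment, stronger than the linear hypothesis written in the theorem statement; this appears to be a slip in the paper itself.

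Your proposal to derive a Harnack exponent linear in $\rho_s(x,y)$ would reconcile the statement with its hypothesis, but this is precisely where your argument has a gap. In the parallel coupling with added radial drift $h(u)$, the distance process obeys $d\rho_u\leq(k_1-k_2\rho_u-h(u))\,du$; to force $\rho_T=0$ on a fixed interval $[s,T]$ one still needs $\int_s^T h(u)\,du$ of order $\rho_s(x,y)$ (even after exploiting the contraction $-k_2\rho_u$), and by Cauchy--Schwarz the Girsanov cost $\int_s^T h(u)^2\,du$ is then of order $\rho_s(x,y)^2/(T-s)$. The confining term shrinks the constant in front---that is exactly the factor $(\e^{2k_2(T-s)}-1)^{-1}$ appearing in Lemma~\ref{lem1a}---but it does not alter the quadratic dependence on the initial distance. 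Neither the auxiliary function $\psi$ of Lemma~\ref{lem1} nor the $W_p$-contraction of Theorem~\ref{main-them2} circumvents this, since those control $\E\rho_t$ rather than the $L^2$-in-time cost of a deterministic coupling drift. So the linear Harnack bound you need is not delivered by the mechanism you describe, and the paper does not claim it either.
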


The idea is to first establish a dimension-free Harnack inequality
under assumption \eqref{I-ineq-condition} below.

 \begin{lemma}\label{lem1a}
   Suppose that there exist constants $k_1,k_2$ such that
   \begin{align}\label{I-ineq-condition}
     I^Z(t,x,y)\leq k_1-k_2 \rho_t(x,y).
   \end{align}
   Then, for any $p>1$, the following dimension-free Harnack
   inequality holds:
   \begin{align*}
     (P_{s,t}f)^p(x)\leq P_{s,t}(f^p)(y)\exp\l(\frac{p}{4(p-1)}\l(k_1^2(t-s)+\frac{4k_1\rho_s(x,y)}{\e^{k_2(t-s)}+1}+\frac{2k_2\rho_s(x,y)^2}{\e^{2k_2(t-s)}-1}\r)\r)
   \end{align*}
   for any non-negative function $f\in \mathcal{B}_b(M)$ and
   $s\leq t<T_c$.
 \end{lemma}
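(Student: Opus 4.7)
The plan is to apply Wang's coupling-by-change-of-measure method. For fixed $s<t<T_c$ and $x,y\in M$, I would construct a coupling $(X_u,Y_u)_{u\in[s,t]}$ with $X_s=x$, $Y_s=y$, where $X$ is the free $L_u$-diffusion as in \eqref{SDE-2} and $Y$ is a pure parallel coupling of $X$ supplemented by a deterministic radial drift chosen so that $X_t=Y_t$ almost surely. Concretely, with $(u_u)$ a horizontal lift of $X$, $P^u_{X_u,Y_u}$ the $g_u$-parallel transport along the minimal $g_u$-geodesic from $X_u$ to $Y_u$, and $V_u\in T_{Y_u}M$ the unit tangent to that geodesic at $Y_u$ pointing away from $X_u$, I take
\begin{align*}
  d_I X_u&=\sqrt 2\,u_u\,dB_u+Z_u(X_u)\,du,\\
  d_I Y_u&=\sqrt 2\,P^u_{X_u,Y_u}u_u\,dB_u+Z_u(Y_u)\,du-\xi_u V_u\,du,
\end{align*}
with the deterministic weight
$$\xi_u=k_1+\frac{k_2\,\rho_s(x,y)\,\e^{-k_2(t-u)}}{\sinh(k_2(t-s))}.$$
The parallel-coupling It\^o formula used in the proof of Theorem \ref{main-them2} shows that the martingale part of $d\rho_u(X_u,Y_u)$ cancels, and together with \eqref{I-ineq-condition} gives
$$d\rho_u(X_u,Y_u)\leq\l(k_1-k_2\rho_u(X_u,Y_u)-\xi_u\r)du,\quad u\leq T,$$
where $T=\inf\{u\geq s\colon X_u=Y_u\}$. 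A standard comparison with the ODE profile $\varphi(u)=\rho_s(x,y)\sinh(k_2(t-u))/\sinh(k_2(t-s))$, which satisfies $\dot\varphi=-k_2\varphi-(\xi_u-k_1)$ together with $\varphi(s)=\rho_s(x,y)$ and $\varphi(t)=0$, forces $\rho_u(X_u,Y_u)\leq\varphi(u)$, hence $T\leq t$. I set $\xi_u\equiv0$ on $\{u\geq T\}$ as usual.

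Since $\xi_u$ is deterministic and bounded on $[s,t]$, Girsanov's theorem is immediate and produces a probability $\tilde\P$ under which $Y$ is the free $L_u$-diffusion starting at $y$, with density $R=d\tilde\P/d\P$ whose logarithm is a $\P$-martingale plus $-\tfrac14\int_s^t\xi_u^2\,du$. Using $X_t=Y_t$, I write
$$P_{s,t}f(x)=\E f(X_t)=\E f(Y_t)=\tilde\E\l[R^{-1}f(Y_t)\r]$$
and apply H\"older's inequality with conjugate exponents $p$ and $q=p/(p-1)$ to obtain
$$(P_{s,t}f)^p(x)\leq\l(\tilde\E R^{-q}\r)^{p/q}P_{s,t}(f^p)(y).$$
Because $\xi_u$ is deterministic, a direct exponential-martingale computation under $\tilde\P$ yields $\tilde\E R^{-q}=\exp\bigl(\tfrac{q(q-1)}{4}\int_s^t\xi_u^2\,du\bigr)$; expanding $\xi_u^2=k_1^2+2k_1(\xi_u-k_1)+(\xi_u-k_1)^2$ and using $\sinh(k_2(t-s))=\tfrac12(\e^{k_2(t-s)}-\e^{-k_2(t-s)})$ evaluates the integral to
$$\int_s^t\xi_u^2\,du=k_1^2(t-s)+\frac{4k_1\rho_s(x,y)}{\e^{k_2(t-s)}+1}+\frac{2k_2\rho_s(x,y)^2}{\e^{2k_2(t-s)}-1}.$$
Combined with the identity $p(q-1)/q=p/(p-1)$, this is precisely the Harnack constant in the lemma.

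The main technical point will be handling the time-dependent cut-locus $\Cut_u(M)$ along the coupling, but this is dealt with by the same smooth upper approximation of $\rho_u$ used in the proof of Theorem~\ref{main-them2} and in \cite{Ch17}; a nonempty cut-locus only decreases the distance, preserving all the inequalities used above. Extension from $f\in C_0^\infty(M)$ to nonnegative $f\in\mathcal B_b(M)$ is a routine monotone-class argument.
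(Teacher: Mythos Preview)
Your proposal is correct and follows essentially the same coupling-by-change-of-measure strategy as the paper: the radial drift $\xi_u$ you choose coincides (after rewriting $\sinh$) with the paper's, the comparison argument forcing $T\le t$ is the same, and the Girsanov/H\"older step yields the identical exponent. The only cosmetic differences are that you apply H\"older under $\tilde\P$ to $R^{-1}$ whereas the paper applies it under $\P$ to $R$, and that once you set $\xi_u=0$ on $\{u\ge T\}$ the drift is no longer deterministic, so your ``$\tilde\E R^{-q}=\exp(\cdots)$'' should really be ``$\le$'' (exactly as the paper argues via $\int_s^\tau\xi^2\le\int_s^t\xi^2$); also the identity you quote should read $p(q-1)=p/(p-1)$, not $p(q-1)/q$.
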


 \begin{proof}
   Let $X_t$ solve the stochastic differential equation
   \begin{align*}
     d_I X_t=\sqrt{2} u_t d B_t+Z_t(X_t)dt,\quad X_s=x,
   \end{align*}
   and let $Y_t$ solve the stochastic differential equation
   \begin{align*}
     d_IY_t=\sqrt{2}P_{X_t,Y_t}^t u_t d B_t+\big(Z_t(Y_t)+\xi(t,X_t,Y_t)\big)dt,\quad Y_s=y,
   \end{align*}
   where the function $\xi\in C^1(I\times M\times M)$ will be
   specified later.  Since the coefficients of the coupled SDE are at
   least $C^1$ outside the diagonal $\{(z,z)\colon z\in M\}$, the coupled SDE has a
   unique solution up to the coupling time
$$\tau:=\inf\{t\geq s\colon X_t=Y_t\}.$$
Let $X_t=Y_t$ for $t\geq \tau$ as usual.  By It\^{o}'s formula, we
have
\begin{align}\label{coupling}
  d\rho_t(X_t,Y_t)&\leq I^Z(t,X_t,Y_t)d t-\xi_t d t \leq (k_1-k_2\rho_t(X_t,Y_t)-\xi_t) d t,\quad t\leq \tau,
\end{align}
where $\xi_t:=\xi(t,X_t,Y_t)$.
Now, for a fixed constant $T\in (s,T_c)$, let
 $$\xi_t=k_1+\frac{2k_2\e^{k_2(t-s)}\rho_s(x,y)}{\e^{2k_2(T-s)}-1},\quad t\geq s.$$
 Then
 \begin{align*}
   \int_s^{T}(k_1-\xi_t)\e^{k_2(t-s)} dt=-\rho_s(x,y),
 \end{align*}
 and
 \begin{align*}
   \rho_T(X_T,Y_T)-\rho_s(x,y)&\leq \int_s^{T}(k_1-\xi_t)\e^{k_2(t-s)} dt-\int_s^T\rho_t(X_t,Y_t)\, dt\\
                              &\leq -\rho_s(x,y)-\int_s^T\rho_t(X_t,Y_t)\, dt.
 \end{align*}
 From this, it is easy to see that $\tau\leq T$ and hence
 $X_{T}=Y_{T}$.

 Now due to Girsanov's theorem, ${Y}$ is generated by $L_t$ under the
 weighted probability measure $R\,\P$ where the density $R$ is given by
 \begin{align*}
   R=\exp{\l(\frac{1}{\sqrt{2}}\int_s^{\tau}\l\langle\xi_t\nabla^t\rho_t(X_t,\newdot)(Y_t), P_{X_t,Y_t}^t u_t d B_t\r\rangle_t-\frac14\int_s^{\tau}\xi_t^2 dt\r)}.
 \end{align*}
 Thus,
 \begin{align*}
   (P_{s,T}f(y))^{p}\leq (P_{s,T}f^{p}(x))\big(\E R^{p/(p-1)}\big)^{p-1}.
 \end{align*}
 Since $\tau\leq T$ and
 \begin{align*}
   t\mapsto N_t:=\exp{\left(\frac{p}{\sqrt{2}(p-1)}\int_s^t\left\langle \xi_r\nabla^r\rho_r(X_r,\newdot)(Y_r), P_{X_r,Y_r}^ru_r d B_r  \right\rangle_r-\frac{p^2}{4(p-1)^2}\int_s^t\xi_r^2 dr\right)}
 \end{align*}
 is a martingale, we have $\E N_{\tau}=1$ and hence,
 \begin{align*}
   \E R^{p/(p-1)}&=\E\l[N_{\tau}\exp\l(\frac{p}{4(p-1)^2}\int_s^{\tau}\xi_r^2 dr\r)\r]\\
                 &\leq \exp \l(\frac{p}{4(p-1)^2}\int_s^{T}\xi_r^2 dr\r)\\
                 &=\exp \l(\frac{p}{4(p-1)^2}\l(k_1^2(T-s)+\frac{4k_1\rho_s(x,y)}{\e^{k_2(T-s)}+1}+\frac{2k_2\rho_s(x,y)^2}{\e^{2k_2(T-s)}-1}\r)\r).
 \end{align*}
 It follows that
 \begin{align*}
   (P_{s,T}f(x))^{p}\leq (P_{s,T}f^{p}(y))\exp \l(\frac{p}{4(p-1)}\l(k_1^2(T-s)+\frac{4k_1\rho_s(x,y)}{\e^{k_2(T-s)}+1}+\frac{2k_2\rho_s(x,y)^2}{\e^{2k_2(T-s)}-1}\r)\r),
 \end{align*}
 as claimed.
\end{proof}
\begin{proof}[Proof of Theorem \ref{supercontractive-them}]
As explained in the proof of Theorem \ref{cor4}, there exist positive constants $k_1$ and $k_2$ such that
\eqref{I-ineq-condition} holds.
  Noting that $(\mu_s)$ is the evolution system of measures and
  using Lemma \ref{lem1a}, we have
  \begin{align*}
    1&=\int_M P_{s,t}|f|^{p}(y)\mu_s(dy)\\
     &\geq |P_{s,t}f|^{p}(x)\int_M \exp \l(-\frac{p}{4(p-1)}\l(k_1^2(t-s)+\frac{4k_1\rho_s(x,y)}{\e^{k_2(t-s)}+1}+\frac{2k_2\rho_s(x,y)^2}{\e^{2k_2(t-s)}-1}\r)\r)\mu_s(dy)
    \\
     &\geq |P_{s,t}f|^{p}(x)\int_{B_s(x_0,1)} \exp \l(-\frac{p}{4(p-1)}\l(k_1^2(t-s)+\frac{4k_1(\rho_s(x)+1)} {\e^{k_2(t-s)}+1}+\frac{2k_2(\rho_s(x)+1)^2}{\e^{2k_2(t-s)}-1}\r)\r)\mu_s(dy)
    \\
     &\geq |P_{s,t}f|^{p}(x)\mu_s(B_s(x_0,1))\exp{\l(-pC(t-s,p,k_1,k_2)-\frac{p(2k_1\e^{k_2 (t-s)}+k_2-2k_1)}{(p-1) (\e^{2k_2(t-s)}-1)}\rho_s(x)^2\r)},
  \end{align*}
  where $B_s(x_0,1)=\{x\in M\colon\rho_s(x)\leq 1\}$ is the unit
  geodesic ball (with respect to $g_s$) centered at $x_0$ and $C(t-s,p,k_1,k_2)$ is a
  constant depending on $t-s$, $p$, $k_1$ and $k_2$. Letting
$$\lambda=\frac{2k_1\e^{k_2 (t-s)}+k_2-2k_1}{(p-1) (\e^{2k_2(t-s)}-1)},$$
we get
\begin{align*}
  |P_{s,t}f|(x)\leq \frac{\exp{\left(C(t-s,p,k_1,k_2)\right)}}{\mu_s\left(B_s(x_0,1)\right)^{1/p}}\e^{\lambda \rho_s^2}<\infty,\quad \mu_t(|f|^{p})=1.
\end{align*}
Therefore
$$\mu_s(|P_{s,t}f|^q)^{1/q}\leq \frac{\exp{\left(C(t-s,p,k_1,k_2)\right)}}{\mu_s(B_s(x_0,1))^{1/p}}(\mu_s(\e^{\lambda q \rho_s^2}))^{1/q}.$$
Thus if $\mu_s(\e^{\lambda q \rho_s^2})<\infty$ for some $s\in I$, then
$P_{s,t}$ is supercontractive, i.e.,
$\|P_{s,t}\|_{(p,t)\rightarrow(q,s)}<\infty$ for any $1<p<q<\infty$.
\end{proof}

\proof[Acknowledgements]
This work has been supported by Fonds National
de la Recherche Luxembourg (Open project O14/7628746 GEOMREV). Shao-Qin Zhang has been supported by the National Natural Science Foundation of China (Grant No. 11901604).

\bibliographystyle{amsplain}%

\bibliography{Bib}

\end{document}